\newtheorem{thm}{Theorem}[section]
\newtheorem{cor}[thm]{Corollary}
\newtheorem{prop}[thm]{Proposition}
\newcommand{\sde}{\emph{SDE}}
\newcommand{\refeq}[1]{~(\ref{#1})}
\newcommand{\myref}[1]{~\ref{#1}}
\newcommand{\Z}{\mathbb{Z}}
\newcommand{\rv}{\textit{rv}}
\newcommand{\id}{\textit{id}}
\newcommand{\iid}{\textit{iid}}
\newcommand{\sd}{\textit{sd}}
\newcommand{\pdf}{\textit{pdf}}
\newcommand{\chf}{\textit{chf}}
\newcommand{\Pqo}{\bm{P}\hbox{-\emph{a.s.}}}
\newcommand{\eqd}{\stackrel{d}{=}}
\newcommand{\PR}[1]{\bm{P}\left\{{#1}\right\}}
\newcommand{\EXP}[1]{\bm{E}\left[{#1}\right]}
\newcommand{\VAR}[1]{\bm{V}\left[{#1}\right]}
\newcommand{\SK}[1]{\bm{Skew}\left[{#1}\right]}
\newcommand{\KUR}[1]{\bm{Kurt}\left[{#1}\right]}
\newcommand{\arem}{$a$-remainder}
\newcommand{\poiss}{\mathfrak{P}}
\newcommand{\pol}{\overline{\mathfrak{B}}}
\newcommand{\unif}{\mathfrak{U}}
\newcommand{\gam}{\Gamma}
\newcommand{\bgam}{\mathfrak{b}\Gamma}
\newcommand{\gampp}{\Gamma_a^{++}}
\newcommand{\bgampp}{\mathfrak{b}\Gamma_a^{++}}
\newcommand{\erl}{\mathfrak{E}}
\newcommand{\bin}{\mathfrak{B}}
\newcommand{\ou}{OU}
\newcommand{\Levy}{L\'{e}vy}
\newcommand{\gou}{$\Gamma$-OU}
\newcommand{\bgou}{$\mathrm{bi}\Gamma$-OU}
\title{\LARGE \textbf{Gamma Related Ornstein-Uhlenbeck Processes and their Simulation}
\footnote{ The views, opinions, positions or strategies
expressed in this article are those of the authors and do not
necessarily represent the views, opinions, positions or strategies
of, and should not be attributed to E.ON SE.}}
\author{Nicola \textsc{Cufaro Petroni}\footnote{cufaro@ba.infn.it}  \\
Dipartimento di \textsl{Matematica} and \textsl{TIRES}, Universit\`a di Bari\\
\textsl{INFN} Sezione di Bari\\ \vspace{7pt}
via E. Orabona 4, 70125 Bari, Italy\\
Piergiacomo
\textsc{Sabino}\footnote{piergiacomo.sabino@eon.com}\\
Quantitative Modelling \\
E.ON SE\\
\vspace{5pt}
 Br\"{u}sseler Platz 1, 45131 Essen, Germany
}
\date{}
\begin{document}
    \maketitle \thispagestyle{empty}

%%%%%%%%%%%%%%%%%%%%%%%%%%%%%%%%%%%%%%%%%%%%%%%%%%%%%%%%%%%%%%%%%%%%%%%%%%%%%%%%%%%%%%%%%%%%%%%%%%%%%%%%%%%
%%%%%%%%%%%%%%%%%%%%%%%%%%%%%%%%%%%%%%%%%%%%%%%%%%%%%%%%%%%%%%%%%%%%%%%%%%%%%%%%%%%%%%%%%%%%%%%%%%%%%%%%%%%
        \begin{abstract}
					\noindent We investigate the distributional properties of two generalized Ornstein-Uhlenbeck (OU) processes whose stationary distributions are the gamma law and the bilateral gamma law, respectively. The said
					distributions turn out to be related to the self-decomposable gamma and bilateral gamma
					laws, and their densities and characteristic functions are here given in
					closed-form. Algorithms for the exact generation of such processes
					are accordingly derived with the advantage  of being
					significantly faster than those available
					in the literature and therefore suitable for real-time simulations. 
        \end{abstract}				
%%%%%%%%%%%%%%%%%%%%%%%%%%%%%%%%%%%%%%%%%%%%%%%%%%%%%%%%%%%%%%%%%%%%%%%%%%%%%%%%%%%%%%%%%%%%%%%%%%%%%%%%%%%
        \section{Introduction and Motivation}\label{sec:intro}
In the present paper we study the distributional properties of the Gamma-Ornstein-Uhlenbeck process (\gou) 
and the Bilateral Gamma-OU process (\bgou). 
Our  contribution consists  in the derivation of the
closed-form of both the density and the characteristic function of
such processes.  In its turn, this main result enables us to
obtain fast algorithms for their exact simulation, along with
 an unbiased transition density that can be used for parameter
estimation.

To this end, following Barndorff-Nielsen and Shephard
\cite{BNSh01}, we consider a \Levy\ process $Z(t)$ and  the
generalized \ou\ process defined by the \sde
            \begin{equation}\label{eq:genOU_sde}
              dX(t) =  -kX(t)dt + dZ(t) \quad\qquad X(0)=X_0\quad \Pqo\qquad
              k>0.
            \end{equation}
 Here $Z(t)$ is called the Backward Driving \Levy\ Process
(\emph{BDLP}), and we will adopt the following notation: if
$\mathfrak{D}$ is the stationary law of $X(t)$ we will say that
$X(t)$ is a $\mathfrak{D}$-\ou\ process; if on the other hand, $Z(1)$
 (namely the \emph{BDLP} at time $t = 1$) is distributed
according to the  \id\ (infinitely divisible) law
$\widetilde{\mathfrak{D}}$, then we will say that $X(t)$ is an
\ou-$\widetilde{\mathfrak{D}}$ process. Now a well known result (see
for instance Cont and Tankov~\cite{ContTankov2004},
Sato~\cite{Sato}) is that,  a given one-dimensional
distribution $\mathfrak{D}$ always is the stationary law of a
suitable \ou-$\widetilde{\mathfrak{D}}$ process if and only if
$\mathfrak{D}$ is self-decomposable.

We recall that a law with probability density (\pdf) $f(x)$ and
characteristic function (\chf) $\varphi(u)$ is said to be
\emph{self-decomposable} (\sd) (see Sato\cite{Sato} or Cufaro
Petroni~\cite{cufaro08}) when for every $0<a<1$ we can find another
law with \pdf\ $g_a(x)$ and \chf\ $\chi_a(u)$ such that
                \begin{equation}\label{aremchf}
                    \varphi(u)=\varphi(au)\chi_a(u)
                \end{equation}
We will accordingly say that a random variable (\rv) $X$ with \pdf\
$f(x)$ and \chf\ $\varphi(u)$ is \sd\ when its law is \sd: looking
at the definition, this means that for every $0<a<1$ we can always
find two \emph{independent} \rv's, $Y$ (with the same law of $X$)
and $Z_a$ ( here called \emph{\arem}), with \pdf\ $g_a(x)$
and \chf\ $\chi_a(u)$) such that
                \begin{equation}\label{sdec-rv}
                    X\eqd aY+Z_a\qquad\quad\Pqo
                \end{equation}
								
It is well known that the \gou\ process $X(t)$ solution
of\refeq{eq:genOU_sde} implies that $Z(t)$ is a compound Poisson with
exponential jumps (see for instance Schoutens \cite{Schoutens03}) and we will prove that for the \bgou\ process $Z(t)$ is instead a compound Poisson with double exponential distribution as defined in Kou \cite{Kou2002}.

We will show  that the law of the \gou\ process and the \bgou\ process at time $t$ coincide with that
of the \arem\ $Z_a$ of a \sd\ gamma and a bilateral gamma distribution, respectively. 
Although  a
similar result has yet to be proved for other generalized \ou\
processes, in  our particular case it allows to  find the \pdf\ and
the \chf\ of $X(t)$ in closed-form because the \arem's of a gamma and a bilateral gamma
distribution turn out to be manageable mixtures of other
elementary distributions. As a consequence, we can design
efficient and fast algorithms to exactly simulate \gou\ and \bgou\ processes,
outperforming in so doing every other existing alternative (see Cont and Tankov \cite{ContTankov2004} and Qu et al. \cite{QDZ19}).
The numerical experiments we have conducted clearly show that the computational times of our approach are very small therefore, our solution is suitable for real time simulations. 

As observed in Barndorff-Nielsen and Shephard
\cite{BNSh01}, the \gou\ process is a very tractable
model that could adopted in many potential applications. For instance, in the energy and in the commodity field, many authors  (using sometimes different naming conventions) coupled a \gou\ process or a combination of \gou\ processes to a standard Gaussian-\ou\ to model day-ahead spot prices. Among others, Kluge \cite{Kluge2006} and Kjaer \cite{Kjaer2008} apply such a combination to price swing options and gas storages while Benth and Pircalabu \cite{BenthPircalabu18} apply a \gou\ process to evaluate wind derivatives. In alternative, Meyer-Brandis  and Tankov \cite{MBT2008} adopted a two regime-switching model consisting in a Gaussian-OU process and a \gou\ process to model power prices. The use of \gou\ or \bgou\ in energy market is justified by the fact that gas and power prices exhibit strong mean-reversion and spikes. Beyond commodity markets, other applications of the \gou\ and \bgou\ processes are available in the literature: among others Barndorff-Nielsen and Shephard \cite{BNSh01} used a \gou\ process to model stochastic volatility while Schoutens and Cariboni \cite{SchoutensCariboni} and Bianchi and Fabozzi \cite{Bianchi2015} adopted the \gou\ process
as a stochastic intensity process for modelling credit default risk and pricing credit default swaps.

The paper is structured as follows: in Section \ref{sec:gou} we study the distributional properties of a \gou\ process showing that it can be represented as a mixture of Polya or binomial mixtures and therefore, it can be seen as a compound sum of independent exponential \rv's or as an Erlang \rv\ with a random index. These findings are instrumental to design the simulation algorithms illustrated in Subsection \ref{subsec:simulation:gou}. 
Section \ref{sec:bgou} analyzes the distributional properties of a \bgou\ process and focuses on the case with symmetric parameters. These results are then used in Subsection \ref{subsec:simulation:bgou} to concieve the relative simulation algorithms. Section \ref{sec:sim:exper} illustrates the numerical experiments that we have conducted to compare the convergence and computational performance of our solutions to the approaches in Cont and Tankov \cite{ContTankov2004} and in Qu at al. \cite{QDZ19}. Finally Section\myref{sec:conclusions}
concludes the paper with an overview of future inquiries and
possible further applications.

%%%%%%%%%%%%%%%%%%%%%%%%%%%%%%%%%%%%%%%%%%%%%%%%%%%%%%%%%%%%%%%%%%%%%%%%%%%%%%%%%%%%%%%%%%%%%%%%%%%%%%%%%%%%%%%%%%%%%%%%%%
\section{Distributional properties of the \gou\ process\label{sec:gou}}

According to an aforementioned result, a \gou($k, \lambda,
\beta$) process $X(t)$ is the solution of\refeq{eq:genOU_sde} where the \emph{BDLP} is a
compound Poisson process $Z(t)$  with intensity $\lambda$ of
the number process $N(t)$, and identically distributed exponential
jumps $J_n\sim\erl_1(\beta)$
            \begin{equation*}
              Z(t)=\sum_{n=0}^{N(t)}J_n\qquad\quad
              J_0=0\quad\Pqo
            \end{equation*}
It turns out that $Z(t)$ is a
subordinator, and that the solution of\refeq{eq:genOU_sde} reads
now as
            \begin{equation}\label{eq:sol:Comp:OU}
              X(t)= x_0e^{-kt} + \sum_{n=0}^{N(t)}e^{-k(t-\tau_n)}J_n
            \end{equation}
where $\tau_n\;(\tau_0=0,\,\Pqo)$ are the jump times of the Poisson
process $N(t)$. Following Cont and Tankov \cite{ContTankov2004} and
Kluge\cite{Kluge2006} it results that the \chf\ of $X(t)$,  with
$X(0)=0\;\Pqo$, is
            \begin{equation}\label{eq:chf:Comp:OU}
              \varphi(u,t) = \left(\frac{\beta - iue^{-kt}}{\beta - iu}\right)^{\frac{\lambda}{k}}
            \end{equation}
As it will be discussed in the next section, this coincides with
the \chf\ of the $e^{-kt}$-remainder of the gamma law
$\gam\big(\,\!^{\lambda}/_k\,,\,\beta\big)$  which is famously
\sd, while its stationary distribution
            \begin{equation*}
              \varphi_s(u) = \left(\frac{\beta }{\beta - iu}\right)^{\frac{\lambda}{k}}
            \end{equation*}
is instead recovered for $t\rightarrow +\infty$ and it coincides
with the \chf\ of the previous gamma law (see also Barndorff-Nielsen
and Shephard\cite{BNSh01}, Grigelionis\cite{Gri03}).
The above result can be also summarized by the following theorem whose proof is a straightforward application of the homogeneity of the Poisson process.
\begin{thm}\label{th:chf:gou}
The \chf\ of $X(t+s)$ conditional on $X(s)$ is given by 
\begin{equation}
\EXP{e^{iuX(t+s)}|X(s)}=e^{iuX(s)e^{-kt}}\times \left(\frac{\beta - iue^{-kt}}{\beta - iu}\right)^{\frac{\lambda}{k}} 
\label{eq:chfgou}
\end{equation}
\end{thm}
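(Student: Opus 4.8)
The plan is to exploit the explicit pathwise solution \refeq{eq:sol:Comp:OU} of the defining \sde\ \refeq{eq:genOU_sde}, together with the independence and homogeneity of the compound Poisson \emph{BDLP}. First I would restart the equation at time $s$: using the integrating factor $e^{kv}$ one gets $d\big(e^{kv}X(v)\big)=e^{kv}\,dZ(v)$, and integrating over $(s,s+t]$ yields
\begin{equation*}
X(t+s)=X(s)\,e^{-kt}+\sum_{n\,:\,\tau_n\in(s,\,s+t]}e^{-k(s+t-\tau_n)}J_n\,,
\end{equation*}
where the sum runs over the jump epochs $\tau_n$ of $N$ falling in $(s,s+t]$ and the $J_n\sim\erl_1(\beta)$ are the corresponding \iid\ exponential jumps, independent of $N$.

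The second step is to identify the law of the stochastic remainder $R_{s,t}:=\sum_{\tau_n\in(s,\,s+t]}e^{-k(s+t-\tau_n)}J_n$. Since $N$ is a homogeneous Poisson process with independent increments, the number of its jumps in $(s,s+t]$ is Poisson$(\lambda t)$ and is independent of the whole history up to time $s$ — in particular of $X(s)$, which by \refeq{eq:sol:Comp:OU} depends only on the jumps occurring before $s$ — while, conditionally on that number, the jump epochs translated back by $s$ are distributed exactly as the epochs of a fresh Poisson process on $(0,t]$. Hence $R_{s,t}$ is independent of $X(s)$ and, by this time-homogeneity, $R_{s,t}\eqd\sum_{\tau_n\in(0,\,t]}e^{-k(t-\tau_n)}J_n$, which by \refeq{eq:sol:Comp:OU} with $x_0=0$ is precisely $X(t)$ started at $X(0)=0\;\Pqo$; therefore $\EXP{e^{iuR_{s,t}}}=\varphi(u,t)$ with $\varphi(u,t)$ given in closed form by \refeq{eq:chf:Comp:OU}.

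Finally, conditioning on $X(s)$ and using the independence just established to factor the exponential,
\begin{equation*}
\EXP{e^{iuX(t+s)}\,\big|\,X(s)}=e^{iuX(s)e^{-kt}}\,\EXP{e^{iuR_{s,t}}}=e^{iuX(s)e^{-kt}}\left(\frac{\beta-iue^{-kt}}{\beta-iu}\right)^{\lambda/k}\,,
\end{equation*}
which is exactly \refeq{eq:chfgou}. There is no genuine analytic difficulty here; the only point that needs some care is the bookkeeping in the second step — verifying that the contribution of the jumps after time $s$ is truly independent of $X(s)$ and has the same law as an OU path freshly started from $0$ — so that the already-known \chf\ \refeq{eq:chf:Comp:OU} may be invoked verbatim. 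One could alternatively bypass the pathwise representation and argue purely at the level of characteristic functions through the Markov property of $X$, but the compound Poisson picture makes the homogeneity argument most transparent, which is presumably why the statement is described as a straightforward consequence of it.
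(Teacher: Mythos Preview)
Your proposal is correct and follows essentially the same approach as the paper, which does not give a detailed proof but only remarks that the result is ``a straightforward application of the homogeneity of the Poisson process.'' Your argument makes exactly this precise: restart the pathwise solution at time $s$, use the independent increments and time-homogeneity of the compound Poisson \emph{BDLP} to see that the post-$s$ contribution is independent of $X(s)$ and equal in law to $X(t)$ started at $0$, and then invoke the known \chf\ \refeq{eq:chf:Comp:OU}.
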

\noindent An alternative version of Theorem \ref{th:chf:gou} can be found in Qu et al \cite{QDZ19}
\begin{thm}\label{th:chf:gou:qdz}
\begin{equation}
\EXP{e^{iuX(t+s)}|X(s)}=e^{iuX(s)e^{-kt}}\times e^{\lambda t \left(\varphi_{\tilde{J}}(u)-1\right)}
\label{eq:chf:qdz}
\end{equation}
where
\begin{equation}
\varphi_{\tilde{J}}(u)=\int_0^1\frac{\beta e^{ktv}}{\beta e^{ktv} - iu}dv,
\label{eq:chf:exp:qdz}
\end{equation}
\end{thm}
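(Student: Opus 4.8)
\noindent The plan is to deduce \refeq{eq:chf:qdz} from the already proved Theorem~\ref{th:chf:gou}. Both formulas carry the same deterministic prefactor $e^{iuX(s)e^{-kt}}$, so it suffices to verify the purely analytic identity
\begin{equation*}
e^{\lambda t\left(\varphi_{\tilde J}(u)-1\right)}=\left(\frac{\beta-iue^{-kt}}{\beta-iu}\right)^{\frac{\lambda}{k}},
\end{equation*}
that is, to evaluate the integral \refeq{eq:chf:exp:qdz} in closed form. Alternatively, and more in the spirit of \cite{QDZ19}, one can build \refeq{eq:chf:qdz} from scratch: by the homogeneity of the Poisson process, conditional on $N(s+t)-N(s)=m$ the jump epochs of $Z$ inside $(s,s+t]$ behave as $m$ i.i.d.\ uniforms on that interval, so from \refeq{eq:sol:Comp:OU} the increment $X(s+t)-X(s)e^{-kt}$ is, conditionally on $m$, a sum of $m$ i.i.d.\ copies of $e^{-kt(1-U)}J$ with $U\sim\unif(0,1)$ and $J\sim\erl_1(\beta)$; the \chf\ of one such term is exactly $\varphi_{\tilde J}(u)$ after the change of variable $v\mapsto 1-v$, and averaging $\varphi_{\tilde J}(u)^m$ against the $\poiss(\lambda t)$ weights gives $e^{\lambda t(\varphi_{\tilde J}(u)-1)}$.

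\noindent The computational core is the evaluation of
\begin{equation*}
\varphi_{\tilde J}(u)-1=\int_0^1\left(\frac{\beta e^{ktv}}{\beta e^{ktv}-iu}-1\right)dv=\int_0^1\frac{iu}{\beta e^{ktv}-iu}\,dv .
\end{equation*}
I would substitute $w=e^{ktv}$, so that $dv=dw/(ktw)$ and the limits become $w=1$ and $w=e^{kt}$, and then split the rational integrand by partial fractions,
\begin{equation*}
\frac{1}{w(\beta w-iu)}=\frac{1}{iu}\left(\frac{\beta}{\beta w-iu}-\frac1w\right),
\end{equation*}
which reduces the integral to an elementary one:
\begin{equation*}
\varphi_{\tilde J}(u)-1=\frac1{kt}\Bigl[\log(\beta w-iu)-\log w\Bigr]_{w=1}^{w=e^{kt}}=\frac1{kt}\log\frac{\beta-iue^{-kt}}{\beta-iu},
\end{equation*}
the last equality using $\log(\beta e^{kt}-iu)-kt=\log(\beta-iue^{-kt})$. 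Multiplying through by $\lambda t$ and exponentiating collapses the $t$ and the logarithm and recovers precisely the factor $\bigl(\frac{\beta-iue^{-kt}}{\beta-iu}\bigr)^{\lambda/k}$ appearing in \refeq{eq:chfgou}, which finishes the proof.

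\noindent There is no deep obstacle here; the one place I would be careful is the passage through the complex logarithm, since $\varphi_{\tilde J}$ is complex valued. For real $u$, $\beta>0$ and $w\in[1,e^{kt}]$ the number $\beta w-iu$ has strictly positive real part and $w>0$, so along the whole path of integration the integrand admits the single-valued continuous antiderivative $\frac1{kt}\bigl(\log(\beta w-iu)-\log w\bigr)$ with the principal branch, and the fundamental theorem of calculus applies verbatim. If one prefers to avoid branch bookkeeping altogether, one may instead observe that both sides of the claimed identity are analytic in $u$ in a neighbourhood of the real axis and coincide, together with all their derivatives, at $u=0$ (where both equal $1$); this pins down the correct determination of the power and is entirely routine.
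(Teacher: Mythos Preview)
Your argument is correct. The paper itself does not supply a proof of this statement; it simply attributes the result to Qu et al.~\cite{QDZ19} and records the compound-Poisson interpretation of the right-hand side. Your write-up therefore actually \emph{adds} a self-contained justification rather than reproducing one.

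You give two routes. The second, probabilistic sketch (condition on the Poisson count, use that the jump times are i.i.d.\ uniforms, identify each summand with $e^{-kt(1-U)}J$) is precisely the derivation of \cite{QDZ19} that the paper is citing. Your primary, analytic route---compute $\varphi_{\tilde J}(u)-1$ explicitly via $w=e^{ktv}$ and partial fractions, and show it equals $\frac{1}{kt}\log\frac{\beta-iue^{-kt}}{\beta-iu}$---is a clean direct verification that collapses the statement to Theorem~\ref{th:chf:gou}. This is more elementary than the probabilistic argument and has the virtue of making the equivalence of the two theorems completely transparent; the only cost is the small care with the complex logarithm, which you handle correctly (the real part of $\beta w-iu$ is strictly positive along the path, so the principal branch is unambiguous).
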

\noindent hence the right-hand side in\refeq{eq:chf:qdz} is the \chf\ of compound Poisson whose jumps are independent copies of the \rv's $\tilde{J}_i$ distributed according to a uniform mixture of exponential laws with random parameter $\beta e^{ktU}$ and $U\sim\unif([0,1])$. 
%%%%%%%%%%%%%%%%%%%%%%%%%%%%%%%%%%%%%%%%%%%%%%%%%%%%%%%%%%%%%%%%%%%%%%%%%%%%%%%%%%%%%%%%%%%%%
\subsection{Polya mixtures of gamma laws $\gam(\alpha, \beta)$\label{subsec:gammamix}}
Due to the fact that the stationary law of a \gou\ is a gamma law, it is natural to investigate how it is related to the law of the process at time $t$.
We recall that the laws of the two gamma family $\gam(\alpha,\beta)$
($\alpha>0,\beta>0$) have the following \pdf\ and \chf
\begin{eqnarray}
  f_{\alpha,\beta}(x) &=& \frac{\beta}{\Gamma(\alpha)}(\beta x)^{\alpha-1}e^{-\beta x} \qquad\qquad x>0\label{gammapdf}\\
  \varphi_{\alpha,\beta}(u)  &=&
  \left(\frac{\beta}{\beta-iu}\right)^\alpha\label{gammachf}
\end{eqnarray}

\noindent In particular $\gam(k,\beta)$, with
$\alpha=k=1,2,\ldots$ a natural number, are the Erlang laws
$\erl_k(\beta)$, and $\gam(1,\beta)$  is the usual exponential
law $\erl_1(\beta)$.  The $\gam(\alpha,\beta)$ laws are \sd\
(see Grigelionis\cite{Gri03}), so that from\refeq{aremchf} the law
of their \arem\ $Z_a$ has the \chf
            \begin{equation}\label{eq:chfGammaAdjoint}
                \chi_a(u; \alpha, \beta)=\frac{\varphi_{\alpha, \beta}(u)}{\varphi_{\alpha, \beta}(au)} = \left(\frac{\beta-iau}{\beta-iu}\right)^{\alpha}
            \end{equation}
 It is apparent now from\refeq{eq:chfGammaAdjoint} and Theorem \ref{th:chf:gou} that the
\chf\refeq{eq:chf:Comp:OU} of a \gou($k, \lambda, \beta$) process at time $t$ 
is that of the \arem\ (hereafter dubbed $\gampp(\alpha,\beta)$ law) of a  $\gam(\alpha,\beta)$ law plus a constant $a x_0$ when we take $a =
e^{-k t}$ and $\alpha=\lambda/k$. 

The moments of $Z_a$, as well as those of a \gou\ process, can be obtained simply deriving the \chf\, however, it is easier to work with the cumulants of $Z_a$ that can be calculated with a straightforward application of the properties of the cumulant generating function (the logarithm of the moment generating function)
\begin{equation}
\kappa_n(Z_a) = (1-a^n) \kappa_n(X)
\label{eq:cum:arem}
\end{equation} 
where $\kappa_n(Z_a)$ and $\kappa_n(X)$ represent the $n$-th cumulant of the \arem\ and a gamma distributed \rv\ $X$, respectively. We remark that\refeq{eq:cum:arem} is applicable to the cumulants of the \arem\ of any \sd\ distribution.
After some algebra, it results that the expected value, the variance, the skewness and the kurtosis of $Z_a$ are 
\begin{eqnarray}
\EXP{Z_a} &=& (1-a)\frac{\alpha}{\beta}\\
\VAR{Z_a} &=& (1-a^2) \frac{\alpha}{\beta^2}\\
\SK{Z_a} &=&\frac{1-a^3}{(1-a^2)^{3/2}}\times\frac{2}{\sqrt{\alpha}}\\
\KUR{Z_a} &=&\frac{1+a^2}{1-a^2}\times\frac{6}{\alpha}+3.
\label{eq:mom:gou}
\end{eqnarray} 
Of course $\EXP{X(t)}=ax_0 + \EXP{Z_a}$ while the variance, the skewness and kurtosis of $X(t)$ and $Z_a$ coincide because these quantities are translation invariant. 
It is interesting to note that the laws $\gam(\alpha, \beta)$ and $\gampp(\alpha, \beta)$ share the same summation and scaling properties.
\begin{prop}\label{prop:scale:sum}
\leavevmode
\makeatletter
\@nobreaktrue
\makeatother
	\begin{enumerate}
		\item If $Z_a\sim\gampp(\alpha, \beta)$ then for any $c>0$, 
		\begin{equation*}
			cX\sim\gampp\left(\alpha, \frac{\beta}{c}\right).
		\end{equation*}
		\item If $Z_{a,i}\sim\gampp(\alpha_i, \beta), \,i=1, \dots, N$ and independent then
		\begin{equation*}
			\sum_{i=1}^NZ_{a,i}\sim\gampp\left(\sum_{i=1}^N\alpha_i, \beta\right).
		\end{equation*}
	\end{enumerate}
\end{prop}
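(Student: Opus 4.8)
The plan is to argue entirely at the level of characteristic functions, relying on the closed form \refeq{eq:chfGammaAdjoint} for the \chf\ of the \arem, namely $\chi_a(u;\alpha,\beta)=\left(\frac{\beta-iau}{\beta-iu}\right)^{\alpha}$, together with the fact that the characteristic function determines the law. Throughout, the parameter $a=e^{-kt}$ is held fixed, so every manipulation below stays inside the one-parameter subfamily indexed by $a$, and it is convenient to phrase everything directly in terms of $\chi_a$.

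For the scaling property (part~1), I would compute the \chf\ of the scaled variable via the elementary identity $\EXP{e^{iu\,cZ_a}}=\EXP{e^{i(cu)Z_a}}=\chi_a(cu;\alpha,\beta)$, and then divide numerator and denominator of the resulting rational expression by $c$ to recognise it as $\left(\frac{(\beta/c)-iau}{(\beta/c)-iu}\right)^{\alpha}=\chi_a(u;\alpha,\beta/c)$. Hence $cZ_a$ carries exactly the \chf\ of a $\gampp(\alpha,\beta/c)$ law, and the claim follows.

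For the additivity property (part~2), I would use independence of the $Z_{a,i}$ to factor the \chf\ of the sum as $\prod_{i=1}^{N}\chi_a(u;\alpha_i,\beta)$; since all factors share the common base $\frac{\beta-iau}{\beta-iu}$, this product combines into $\left(\frac{\beta-iau}{\beta-iu}\right)^{\sum_{i}\alpha_i}=\chi_a\big(u;\sum_{i}\alpha_i,\beta\big)$, which is the \chf\ of a $\gampp(\sum_{i}\alpha_i,\beta)$ law, and the claim follows likewise.

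There is no genuine obstacle here: both parts reduce to a one-line manipulation of the rational function in \refeq{eq:chfGammaAdjoint}, and they simply mirror the classical scaling and convolution behaviour of the gamma family $\gam(\alpha,\beta)$ embodied in \refeq{gammachf}---which is precisely the content of the remark that $\gam$ and $\gampp$ share the same summation and scaling properties. The only points worth a line of care are using the fact that a distribution is determined by its characteristic function in order to return from an identity of transforms to equality in law, and noting that $\sum_{i}\alpha_i>0$, so that the exponent obtained in part~2 still indexes a genuine $\gampp$ law.
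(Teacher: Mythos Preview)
Your proposal is correct and follows essentially the same route as the paper: both parts are proved by direct manipulation of the characteristic function \refeq{eq:chfGammaAdjoint}, dividing through by $c$ for the scaling statement and multiplying the independent factors and collecting exponents for the summation statement. The extra remarks you add about the \chf\ determining the law and about $\sum_i\alpha_i>0$ are sound but not strictly needed beyond what the paper does.
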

\begin{proof}
\item The \chf\ of $cZ_a$ is 
	\begin{equation*}
		\chi_a(cu) = \left(\frac{\beta-iacu}{\beta-icu}\right)^{\alpha}=\left(\frac{\frac{\beta}{c}-iau}{\frac{\beta}{c}-iu}\right)^{\alpha},	
	\end{equation*}
	that is the \chf\ a $\gampp(\alpha, \frac{\beta}{c})$ distributed \rv.
\item The \chf\ $\bar{\chi_a}(u)$ of $\sum_{i=1}^NZ_{a,i}$ is
	\begin{equation*}
		\bar{\chi_a}(u) = \EXP{e^{iu\sum_{i=1}^NZ_{a,i}}} = \prod_{i=1}^N \left(\frac{\beta-iau}{\beta-iu}\right)^{\alpha_i}=	\left(\frac{\beta-iau}{\beta-iu}\right)^{\sum_{i=1}^N\alpha_i},
	\end{equation*}
	that coincides with the \chf\ of a $\gampp(\sum_{i=1}^N\alpha_i,\beta)$ law and that concludes the proof.
\end{proof}

In order to further investigate the distributional properties of the law of the \arem\ and of the law of a \gou\ process, we now consider a \rv\ $S$ distributed according to a
\emph{negative binomial, or Polya distribution}, denoted hereafter
$\pol(\alpha,p)\,,\; \alpha>0,\,0<p<1$,  namely such
that
            \begin{equation*}
                \PR{S=k} =  \binom{\alpha+k-1}{k}(1-p)^\alpha p^k\qquad\quad k=0,1,\ldots
            \end{equation*}
Remark that,  when $\alpha=n=1,2,\ldots$ is a natural number,
the Polya distribution $\pol(n,p)$ coincides with the so called
\emph{Pascal distribution}, and in particular $\pol(1,p)$ is nothing
 else than the usual \emph{geometric distribution} $(1-p)p^k$.
 From the generalized binomial formula it is possible to see
now that its \chf\ is
            \begin{equation*}\label{polya}
              \varphi_S(u) =\sum_{k=0}^\infty\binom{\alpha+k-1}{k}(1-p)^\alpha p^ke^{iuk}= \left(\frac{1-p}{1-p\,e^{iu}}\right)^\alpha.
            \end{equation*}
where the series -- that certainly converges because
$|p\,e^{iu}|=p<1$ -- has the form of an infinite Polya
$\pol(\alpha,p)$-weighted mixture of degenerate laws.

As observed  for instance in Panjer and Wilmott
\cite{panjer_willmot_1981}, this result can also be extended
by taking the \rv's
            \begin{equation*}
                Z=\sum_{j=0}^SX_j
            \end{equation*}
sums of a random number  $S\sim \pol(\alpha,p)$ of \iid\ \rv's $X_j$
with the common \chf\ $\varphi_X(u)$, and $X_0=0,\;\Pqo$:  in
this case we have indeed
            \begin{eqnarray}\label{Z}
              \varphi_Z(u) &=& \EXP{e^{iuZ}}=\EXP{\EXP{\left.e^{iuZ}\right|S}}\nonumber \\
               &=&\sum_{k=0}^\infty\binom{\alpha+k-1}{k}(1-p)^\alpha
               p^k\,\EXP{e^{iu\sum_{j=0}^kX_j}} \\
               &=&(1-p)^\alpha\sum_{k=0}^\infty\binom{\alpha+k-1}{k}
               p^k\varphi_X(u)^k=\left(\frac{1-p}{1-p\,\varphi_X(u)}\right)^\alpha\nonumber
            \end{eqnarray}
where again  the series converges because
$|p\,\varphi_X(u)|\le p<1$. This shows that the law of $Z$ is
again an infinite Polya $\pol(\alpha,p)$-weighted mixture of laws
$\varphi_X(u)^k$: if these laws also have a known \pdf, then the law
of $Z$ too has an explicit representation as a mixture of
\pdf's

\begin{thm}\label{th:arem:density}
The law of the \arem\ of the $\gam(\alpha,\beta)$ law is an infinite Polya
$\pol(\alpha,1-a)$-weighted mixture of Erlang laws
$\erl_k\big(\,\!^\beta/_a\big)$ with the following \chf\ $\chi_a(u, \alpha, \beta)$ and density $g_a(x, \alpha, \beta)$
\begin{equation}
\chi_a(u, \alpha, \beta)=\sum_{k=0}^\infty\binom{\alpha+k-1}{k}a^\alpha(1-a)^k\left(\frac{\beta}{\beta-iau}\right)^k
\label{eq:polya:gamma:chf}
\end{equation}
\begin{equation}\label{eq:mix:polya}
	g_a(x, \alpha, \beta)=a^\alpha\delta(x)+\sum_{k=1}^\infty\binom{\alpha+k-1}{k}a^\alpha(1-a)^kf_{k,\,\!^\beta/_a}(x) \qquad\qquad x>0
\end{equation}						
\end{thm}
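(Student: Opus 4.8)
The plan is to read \refeq{eq:chfGammaAdjoint} as a particular case of the general Polya-mixture identity \refeq{Z}. Recall from \refeq{eq:chfGammaAdjoint} that the \arem\ of $\gam(\alpha,\beta)$ has \chf\ $\chi_a(u;\alpha,\beta)=\big((\beta-iau)/(\beta-iu)\big)^{\alpha}$. In \refeq{Z} I would take the Polya parameter $p=1-a$ (so that $1-p=a$) and let the summands $X_j$ be exponential $\erl_1(\beta/a)$ variables, whose \chf\ is $\varphi_X(u)=\frac{\beta/a}{\beta/a-iu}=\frac{\beta}{\beta-iau}$. Then, with $Z=\sum_{j=0}^S X_j$ and $S\sim\pol(\alpha,1-a)$ independent of the $X_j$, formula \refeq{Z} gives $\varphi_Z(u)=\big(a/(1-(1-a)\varphi_X(u))\big)^{\alpha}$.

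The next step is the short algebraic simplification
\[
1-(1-a)\frac{\beta}{\beta-iau}=\frac{(\beta-iau)-(1-a)\beta}{\beta-iau}=\frac{a(\beta-iu)}{\beta-iau},
\]
from which $\varphi_Z(u)=\big((\beta-iau)/(\beta-iu)\big)^{\alpha}=\chi_a(u;\alpha,\beta)$. By the uniqueness theorem for characteristic functions the law of the \arem\ coincides with the law of $Z$, that is, it is the Polya $\pol(\alpha,1-a)$-weighted mixture of the laws of sums of $k$ independent $\erl_1(\beta/a)$ variables, as claimed.

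Finally I would exhibit the mixture explicitly. Expanding \refeq{Z} term by term gives $\chi_a(u)=\sum_{k\geq0}\binom{\alpha+k-1}{k}a^{\alpha}(1-a)^k\varphi_X(u)^k$, which becomes \refeq{eq:polya:gamma:chf} once $\varphi_X(u)=\beta/(\beta-iau)$ is inserted; the series converges absolutely because $|(1-a)\varphi_X(u)|\leq 1-a<1$, as already observed after \refeq{Z}. On the level of densities, the sum of $k\geq1$ independent $\erl_1(\beta/a)$ variables is $\erl_k(\beta/a)$ with \pdf\ $f_{k,\beta/a}(x)$, while the $k=0$ term corresponds to $X_0=0$ almost surely and carries weight $\PR{S=0}=(1-p)^{\alpha}=a^{\alpha}$, i.e.\ an atom $a^{\alpha}\delta(x)$ at the origin; collecting the two contributions yields \refeq{eq:mix:polya}.

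There is essentially no deep obstacle: the argument amounts to spotting the right parameters ($p=1-a$ and exponential rate $\beta/a$ rather than $\beta$) and performing the one-line simplification of the denominator. The only points worth a word of care are the bookkeeping of the degenerate $k=0$ term as a Dirac mass in \refeq{eq:mix:polya}, and the appeal to characteristic-function uniqueness that upgrades the \chf\ identity to an identity of laws; the convergence of the mixture series is inherited from the discussion surrounding \refeq{Z}.
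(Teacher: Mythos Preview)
Your proposal is correct and follows essentially the same route as the paper: both arguments choose $p=1-a$ and $X_j\sim\erl_1(\beta/a)$, perform the one-line simplification of the denominator in \refeq{Z} to identify $\varphi_Z$ with \refeq{eq:chfGammaAdjoint}, and then read off the mixture series \refeq{eq:polya:gamma:chf} and the density \refeq{eq:mix:polya} with its degenerate $k=0$ term. The only cosmetic difference is direction---the paper starts from $\chi_a$ and rewrites it as the right-hand side of \refeq{Z}, while you start from $Z$ and compute its \chf---but the algebra and the identification are identical.
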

\begin{proof}
By taking now  $p=1-a$ and
$X\sim\erl_1\big(\,\!^\beta/_a\big)$ an exponential with \chf\ 
\begin{equation*}
    \varphi_X(u)=\frac{\beta}{\beta-iau}
\end{equation*}

\noindent it is easy to see from\refeq{eq:chfGammaAdjoint}
and\refeq{Z} that
            \begin{eqnarray*}
              \left(\frac{\beta-iau}{\beta-iu}\right)^\alpha &=& \left(\frac{a(\beta-iau)}{\beta-iau-(1-a)\beta}\right)^\alpha
                    =\left(\frac{a}{1-(1-a)\frac{\beta}{\beta-iau}}\right)^\alpha \\
               &=&\sum_{k=0}^\infty\binom{\alpha+k-1}{k}a^\alpha(1-a)^k\left(\frac{\beta}{\beta-iau}\right)^k
            \end{eqnarray*}
that is the \chf\ of an infinite Polya
$\pol(\alpha,1-a)$-weighted mixture of Erlang laws
$\erl_k\big(\,\!^\beta/_a\big)$.
This distribution can also be
considered either as an Erlang law $\erl_S\big(\,\!^\beta/_a\big)$
with a Polya $\pol(\alpha,1-a)$-distributed random index $S$,  or
even as that of a sum of a Polya random number of \iid\ exponentials
            \begin{equation*}
                \sum_{j=0}^SX_j\qquad\qquad
                S\sim\pol(\alpha,1-a)\qquad
                X_j\sim\erl_1\big(\,\!^\beta/_a\big)\qquad X_0=0,\;\Pqo
            \end{equation*}
 Since on the other hand from\refeq{gammapdf} the \pdf's of the
Erlang laws $\erl_k\big(\,\!^\beta/_a\big)$ are known, also the
\pdf\ of the \arem\ $Z_a$ of a gamma law $\gam(\alpha,\beta)$ is
the following explicit mixture plus a degenerate in $x=0$
\begin{equation*}
		g_a(x, \alpha, \beta)=a^\alpha\delta(x)+\sum_{k=1}^\infty\binom{\alpha+k-1}{k}a^\alpha(1-a)^kf_{k,\,\!^\beta/_a}(x) \qquad\qquad x>0
\end{equation*}
that concludes the proof.
\end{proof}
\noindent The above results give a closed-form representation of the transition density of a \gou\ process. 
\begin{cor}
The transition density $p(x, t+s| y, s) $of the \gou$(k, \lambda, \beta)$ is
\begin{equation}
p(x, t+s| y, s) = g_a\left(x-ay, \frac{\lambda}{k}, \beta\right), \quad a=e^{-kt}. 
\label{eq:gou:tdensity}
\end{equation}
where $g_a(\cdot,\lambda/k, \beta)$ is defined in\refeq{eq:mix:polya}.
\end{cor}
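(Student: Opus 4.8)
The plan is to obtain the statement by combining the conditional characteristic function of Theorem \ref{th:chf:gou} with the explicit law of the \arem\ found in Theorem \ref{th:arem:density}. First I would observe that, by\refeq{eq:chfgou}, with $a=e^{-kt}$ and $\alpha=\lambda/k$ the conditional \chf\ of $X(t+s)$ given $X(s)=y$ factorises as
\begin{equation*}
\EXP{e^{iuX(t+s)}\,|\,X(s)=y}=e^{iuay}\cdot\left(\frac{\beta-iau}{\beta-iu}\right)^{\alpha},
\end{equation*}
and the second factor is exactly $\chi_a(u,\alpha,\beta)$ of\refeq{eq:chfGammaAdjoint}. A product of characteristic functions is the \chf\ of a sum of independent \rv's having those factors as marginal \chf's; here the first factor is the \chf\ of the degenerate \rv\ equal to the constant $ay$, while the second is the \chf\ of $Z_a\sim\gampp(\alpha,\beta)$. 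Hence, conditionally on $X(s)=y$, we have $X(t+s)\eqd ay+Z_a$.

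Next I would invoke Theorem \ref{th:arem:density}, which gives the law of $Z_a$ explicitly through\refeq{eq:mix:polya} as the atom $a^{\alpha}\delta(x)$ plus the Polya-weighted mixture of Erlang densities $f_{k,\beta/a}$. Convolving this law with the degenerate law concentrated at $ay$ merely translates it by $ay$: the atom is carried to $x=ay$ (consistently with the fact that, with probability $e^{-\lambda t}=a^{\lambda/k}$, no jump of the \emph{BDLP} occurs on $(s,t+s]$ and then $X(t+s)=e^{-kt}X(s)$ exactly), and each term $f_{k,\beta/a}(x)$ becomes $f_{k,\beta/a}(x-ay)$. This is precisely $g_a(x-ay,\alpha,\beta)$, which proves\refeq{eq:gou:tdensity}.

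Finally I would remark that the right-hand side depends on $s$ only through bookkeeping: since the \emph{BDLP} $Z(t)$ in\refeq{eq:genOU_sde} is a \Levy\ (hence time-homogeneous) process, $X(t)$ is a time-homogeneous Markov process and its transition kernel from time $s$ to time $t+s$ depends on $(y,t)$ alone. I do not anticipate any genuine obstacle; the single point deserving care is that $g_a$ is not an ordinary probability density but a mixture of an atom at the origin and an absolutely continuous part, so\refeq{eq:gou:tdensity} must be read in this generalised sense, with the translation-by-$ay$ argument applied to the atom as well as to the continuous component.
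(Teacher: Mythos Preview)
Your proposal is correct and follows exactly the route the paper intends: the corollary is stated without proof in the paper, as an immediate consequence of combining Theorem~\ref{th:chf:gou} (the conditional \chf\ identifies $X(t+s)$ given $X(s)=y$ with $ay+Z_a$ for $a=e^{-kt}$, $\alpha=\lambda/k$) with Theorem~\ref{th:arem:density} (the explicit \pdf\ of $Z_a$), and your write-up simply spells out these two steps. Your closing caveat about the atom at $ay$ is apt and matches the paper's convention in\refeq{eq:mix:polya}.
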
 			
Although the parameters estimation is not the focus of our study, knowing the transition density in closed-form gives a remarkable advantage compared to the results in Qu et al. \cite{QDZ19} because one can write the log-likelihood and maximize it explicitly. 
Of course, in any practical applications,  some series truncation
rule must be  adopted but it can however be easily fine tuned.
%, and will above all, not produce unwanted biases linked for instance with
%the Euler discretization procedure as will show in the next Section \ref{subsec:simulation:gou}. 

To this end, in discrete time, a \gou\ process is equivalent to a GAR(1) auto-regressive process introduced by Gaver and Lewis \cite{gaver_lewis_1980} 
whose parameter estimation based on the EM algorithm has been discussed in Popovici and M. Dumitrescu \cite{PD2010} (for $\lambda/k$ integer only, see next section).
In alternative, one could adopt the generalized method of moments using Equations\refeq{eq:mom:gou} and obtain the associated Yule-Walker equations.  

%%%%%%%%%%%%%%%%%%%%%%%%%%%%%%%%%%%%%%%%%%%%%%%%%%%%%%%%%%%%%%%%%%%%%%%%%%%%%%%%%%%%%%%
\subsection{Binomial mixtures\label{subsec:erlangRV}} 
It follows from the previous subsection that
for $\alpha=n=1,2,\ldots$ the \arem\ of the Erlang laws
$\gam(n,\beta)=\erl_n(\beta)$ is an infinite mixture of Erlang
$\erl_k\big(\,\!^\beta/_a\big)$ with Pascal weights $\pol(n,1-a)$,
while for $n=1$ the \arem\ of the exponential law
$\gam(1,\beta)=\erl_1(\beta)$ is an infinite mixture of Erlang
$\erl_k(\beta)$ with geometric weights $\pol(1,1-a)$. In these
 two cases, however, it is  easy to see that there is an
alternative decomposition of the \arem\ law into a \emph{finite},
binomial mixture of Erlang laws.
\begin{thm}\label{th:arem:bin:density}
The law of the \arem\ of the $\erl_n(\beta)$ law is a finite mixture of Erlang $\erl_k(\beta)$ with binomial
weights $\bin(n,1-a)$ with the following \chf\ $\chi_a(u, \alpha, \beta)$ and density $g_a(x, \alpha, \beta)$
\begin{equation}
\chi_a(u, n, \beta)=\sum_{k=0}^n\binom{n}{k}a^{n-k}(1-a)^k\left(\frac{\beta}{\beta-iu}\right)^k
\label{eq:bin:erl:chf}
\end{equation}
\begin{equation}\label{eq:mix:bin}
	g_a(x, n, \beta)=a^\alpha\delta(x)+\sum_{k=1}^n\binom{n}{k}a^{n-k}(1-a)^kf_{k,\beta}(x) \qquad\qquad x>0
\end{equation}						
\end{thm}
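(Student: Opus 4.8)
The plan is to mimic the proof of Theorem \ref{th:arem:density}, but to exploit the fact that when $\alpha=n$ is a natural number the relevant binomial expansion terminates after finitely many terms. The starting point is the \chf\ of the \arem\ of $\erl_n(\beta)=\gam(n,\beta)$, which from\refeq{eq:chfGammaAdjoint} is
\begin{equation*}
\chi_a(u; n, \beta) = \left(\frac{\beta - iau}{\beta - iu}\right)^n .
\end{equation*}
The key algebraic observation is the splitting $\beta - iau = a(\beta - iu) + (1-a)\beta$, which yields
\begin{equation*}
\frac{\beta - iau}{\beta - iu} = a + (1-a)\,\frac{\beta}{\beta - iu}.
\end{equation*}

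Next I would raise this identity to the $n$-th power and apply the ordinary (finite) binomial theorem, obtaining
\begin{equation*}
\chi_a(u; n, \beta) = \left(a + (1-a)\,\frac{\beta}{\beta - iu}\right)^n = \sum_{k=0}^n \binom{n}{k} a^{n-k}(1-a)^k\left(\frac{\beta}{\beta - iu}\right)^k ,
\end{equation*}
which is exactly\refeq{eq:bin:erl:chf}. Since the coefficients $\binom{n}{k}a^{n-k}(1-a)^k$ are non-negative and sum to $(a+(1-a))^n = 1$, this exhibits the law of the \arem\ as a genuine finite mixture: assigning the binomial weight $\bin(n,1-a)$ to the index $k$, the $k$-th component has \chf\ $\left(\beta/(\beta-iu)\right)^k$, i.e. it is the Erlang law $\erl_k(\beta)$ for $k\ge 1$ and the degenerate law at $0$ for $k=0$.

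Finally I would invert the \chf\ term by term. The $k=0$ contribution is the atom $a^n\delta(x)$, while for $k\ge 1$ the Erlang \pdf\ $f_{k,\beta}(x)$ given in\refeq{gammapdf} is available in closed form; collecting these pieces produces\refeq{eq:mix:bin} and concludes the proof. There is essentially no obstacle here; the only point worth flagging is the contrast with Theorem \ref{th:arem:density}, where the generalized binomial series does not terminate and produces an \emph{infinite} Polya mixture of rescaled Erlang laws $\erl_k(\beta/a)$. Here the integrality of $n$ collapses that series to a finite sum and, as a by-product, a mixture of \emph{unscaled} Erlang laws $\erl_k(\beta)$, so that the two representations of the same distribution are genuinely different; their equality, if one wishes, can be checked directly at the level of the \chf.
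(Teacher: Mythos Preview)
Your proof is correct and follows essentially the same route as the paper: rewrite $(\beta-iau)/(\beta-iu)=a+(1-a)\,\beta/(\beta-iu)$, apply the ordinary binomial theorem for integer exponent $n$, and then read off the mixture representation and invert term by term. The paper's version is slightly terser but the argument is identical.
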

\begin{proof}
$\alpha=n$ we have indeed
from\refeq{eq:chfGammaAdjoint}
                \begin{equation}\label{binmix}
                  \left(\frac{\beta-iau}{\beta-iu}\right)^n =
                  \left(a+(1-a)\frac{\beta}{\beta-iu}\right)^n
                        =\sum_{k=0}^n\binom{n}{k}a^{n-k}(1-a)^k\left(\frac{\beta}{\beta-iu}\right)^k
                \end{equation}
namely a finite mixture of Erlang $\erl_k(\beta)$ with binomial
weights $\bin(n,1-a)$, or in other words an Erlang law $\erl_S(\beta)$ with a binomial $\bin(n,1-a)$-distributed random index
$ S$, that is a sum
                \begin{equation*}
                    Z_a\eqd\sum_{j=0}^S X_j\qquad\qquad {S}\sim\bin(n,1-a)
                \end{equation*}
                of $S$ \iid\ exponentials $X_j\sim\erl_1(\beta)$ with
                $X_0=0,\;\Pqo$
This ambiguity in the mixture representation of a law is apparently
allowed because in general  a mixture decomposition is not
unique.
Once again, the \pdf's of the
Erlang laws are known therefore the density is simply given by\refeq{eq:mix:bin} that concludes the proof.
\end{proof}
\noindent The above results lead to a closed-form representation of the transition density of a \gou\ process (or better Erlang-\ou process) when $\lambda/k=n$. 
in terms of a finite sum of Erlang densities plus a degenerate term.
\begin{cor}
The transition density $p(x, t+s| y, s) $of the \gou$(k, \lambda, \beta)$ with $\lambda/k=n$ and $n\in\mathbb{N}^*$ is
\begin{equation}
p(x, t+s| y, s) = g_a(x-ay, n, \beta), \quad a=e^{-kt}. 
\label{eq:erlou:tdensity}
\end{equation}
where $g_a(\cdot, n, \beta)$ is defined in\refeq{eq:mix:bin}.
\end{cor}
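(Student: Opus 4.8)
The plan is to specialise the conditional characteristic function of Theorem~\ref{th:chf:gou} to the case $\lambda/k=n\in\N$ and to recognise it, via Theorem~\ref{th:arem:bin:density}, as the \chf\ of a shifted \arem; uniqueness of the \chf\ then delivers the transition density.

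First I would set $a=e^{-kt}$ and rewrite the right-hand side of\refeq{eq:chfgou} with $\lambda/k=n$ as
\begin{equation*}
\EXP{e^{iuX(t+s)}\,|\,X(s)=y}=e^{iuay}\left(\frac{\beta-iau}{\beta-iu}\right)^{n}.
\end{equation*}
By\refeq{eq:bin:erl:chf} the second factor is exactly $\chi_a(u,n,\beta)$, the \chf\ of the \arem\ $Z_a$ of an $\erl_n(\beta)$ law, while $e^{iuay}$ is the \chf\ of the law degenerate at $ay$. Hence, conditionally on $X(s)=y$, the \rv\ $X(t+s)$ is distributed as $ay+Z_a$ with $Z_a\sim\gampp(n,\beta)$: a deterministic shift of an \arem.

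Next I would pass from characteristic functions to densities. Theorem~\ref{th:arem:bin:density} already supplies the law of $Z_a$ in closed form as the finite mixture $g_a(\cdot,n,\beta)$ of\refeq{eq:mix:bin} --- a degenerate term at the origin plus a sum of Erlang densities $f_{k,\beta}$ --- so translating by the constant $ay$ shows that the conditional law of $X(t+s)$ given $X(s)=y$ has ``density'' $g_a(x-ay,n,\beta)$ with respect to the appropriate mixed reference measure. By uniqueness of the \chf\ this identifies the transition density, which is\refeq{eq:erlou:tdensity}.

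There is no genuinely difficult step: the claim is a direct corollary of the two quoted theorems, and the algebra involved is the same one-line manipulation that already appears in\refeq{binmix}. The only point deserving a word of care is the interpretation of $p(x,t+s\,|\,y,s)$ as a density: because of the $\delta$-term inherited from the \arem, the transition law carries an atom of mass $a^{n}=e^{-\lambda t}$ at $x=ay$ (the event that the underlying compound Poisson \emph{BDLP} performs no jump on $[s,s+t]$), so\refeq{eq:erlou:tdensity} must be read in the distributional sense, exactly as\refeq{eq:mix:bin}; the same remark applies to the corollary stated for general $\lambda/k$.
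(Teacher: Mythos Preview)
Your argument is correct and is precisely the intended one: the paper states this corollary without proof, treating it as an immediate consequence of Theorem~\ref{th:chf:gou} and Theorem~\ref{th:arem:bin:density} in exactly the way you describe. Your additional remark on the atom at $x=ay$ and the distributional reading of the density is appropriate and mirrors the paper's own handling of $g_a$ in\refeq{eq:mix:bin}.
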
 					

 The said binomial decomposition, however, while legitimate
for $\alpha=n$, cannot be extended to the general case of
$\alpha>0$. While indeed -- always from the generalized
binomial formula -- the  following infinite decomposition of
$\chi_a(u, \alpha, \beta)$ in\refeq{eq:chfGammaAdjoint}
                \begin{eqnarray}\label{pseudomix}
  \left(\frac{\beta-iau}{\beta-iu}\right)^\alpha\!\! &=&
  \left(a+(1-a)\frac{\beta}{\beta-iu}\right)^\alpha
        =a^\alpha\left(1+\frac{1-a}{a}\frac{\beta}{\beta-iu}\right)^\alpha\nonumber\\
  &=&a^\alpha\sum_{k=0}^\infty
  \binom{\alpha}{k}\left(\frac{1-a}{a}\frac{\beta}{\beta-iu}\right)^k=
  \sum_{k=0}^n\omega_k(a,\alpha)\left(\frac{\beta}{\beta-iu}\right)^k
  \\
  \omega_k(a,\alpha)&=&\binom{\alpha}{k}a^{\alpha-k}(1-a)^k\nonumber
                \end{eqnarray}
looks again as another infinite mixture of Erlang laws
$\erl_k(\beta)$,  we must remark that first this expansion
definitely converges exclusively when it is
                \begin{equation*}
                    \left|\frac{1-a}{a}\frac{\beta}{\beta-iu}\right|\le\frac{1-a}{a}<1
                \end{equation*}
which, for $0<a<1$, only happens if $\frac{1}{2}\le a<1$;  and
second, and mainly, that although the infinite sequence of the
$\omega_k(a,\alpha)$ sums up to one, the generalized binomial
coefficients
\begin{equation*}
    \binom{\alpha}{k}=\frac{\alpha(\alpha-1)\ldots(\alpha-k+1)}{k!}\qquad\qquad
     \binom{\alpha}{0}=1
\end{equation*}
take also negative values for $k>\alpha+1$, and hence the
$\omega_k(a,\alpha)$ not always constitute a legitimate probability
distribution. As a consequence,  the decomposition\refeq{pseudomix}
is not in general a  true mixture, even if it holds
mathematically whenever it converges. In other words (as an
alternative to\refeq{eq:mix:polya}) the \pdf\ of the \arem\ $Z_a$
can always be represented also as the following combination --
let us call it a \emph{pseudo-mixture} -- of Erlang \pdf's
                \begin{equation}\label{eq:densityGammaAdjoint}
                  g_a(x) = a^\alpha\delta(x) + \sum_{k\ge 1}^{\infty}\omega_k(a, \alpha) f_{k, \beta}(x), \qquad \frac{1}{2}\le
                  a<1
                \end{equation}
 that can be interpreted as a  true mixture only when
$\alpha$ is an integer and the sum  is cut down to a finite
number of terms.

%%%%%%%%%%%%%%%%%%%%%%%%%%%%%%%%%%%%%%%%%%%%%%%%%%%%%%%%%%%%%%%%%%%%%%%%%%%%%%%%%%%%%%%%%%%%%%%%%%%%%%%%%%%%%%%%%%%%%%%%%%%%%%%%%%%%%%%%%%%%%%%%%%%%%%%%%%%%%%%%%%%%%%%%%%%%%%%%%%%
%%%%%%%%%%%%%%%%%%%%%%%%%%%%%%%%%%%%%%%%%%%%%%%%%%%%%%%%%%%%%%%%%%%%%%%%%%%%%%%%%%%%%%%%%%%%%%%%%%%%%%%%%%%%%%%%%%%%%%%%%%%%%%%%%%%%%%%%%%%%%%%%%%%%%%%%%%%%%%%%%%%%%%%%%%%%%%%%%%%
\subsection{Simulation Algorithms\label{subsec:simulation:gou}}
The results of the previous sections show that the
\chf\refeq{eq:chf:Comp:OU} of an \gou$(k,\lambda,\beta)$
coincides with that of the \emph{a}-remainder $Z_a$ of a gamma
law $\gam(\alpha,\beta)$ by simply taking $a=e^{-k\Delta t}$ and $\alpha=\lambda/k$. Algorithm \ref{alg:sd} summarizes then the procedure to generate the skeleton of a
\gou$(k,\lambda,\beta)$ process over a time grid $t_0, t_1,\dots, t_M$, $\Delta t_m = t_m - t_{m-1}\,,\; m=1,\dots,M $.
\begin{algorithm}
\caption{ }\label{alg:sd}
		\begin{algorithmic}[1]
		\For{ $m=1, \dots, M$}
		\State $\alpha\gets\lambda/k,\;\;  a\gets e^{-k\Delta t_m}$
		\State $b\gets B\sim \pol(\alpha,1-a)$ \Comment{Generate a Polya $(\alpha,1-a)$ \rv}
		\State $z_a^m \gets Z_a^{(m)}\sim\erl_{b}\left(\,\!^\beta/_a\right)$; \Comment{ Generate an Erlang \rv\ with rate $\beta/_a$}
		\State $X(t_m)\gets a\,X(t_{m-1}) + z_a^{(m)}$.
		\EndFor
		\end{algorithmic}
\end{algorithm}

The simulation of $Z_a$ is very simple and it is
applicable with no parameter  constraints.  It is worthwhile noticing that such an algorithm resembles to the one proposed in McKenzie \cite{McK87} however having the advantage to simulate exponential \rv's. When in particular
$\lambda/k = \alpha$ is an integer $n$, the steps three and four in Algorithm \ref{alg:sd} can be replaced with those in Algorithm \ref{alg:bin}
 \begin{algorithm}
 \caption{ }\label{alg:bin}
		\begin{algorithmic}[1]
		\setcounter{ALG@line}{2}
		\State $b\gets B\sim \bin(n,1-a)$ \Comment{Generate a Binomial \rv}
		\State $z_a^m \gets Z_a^{(m)}\sim\erl_{b}\left(\beta\right)$; \Comment{ Generate an Erlang \rv\ with rate $\beta$}
		\end{algorithmic}
\end{algorithm}

Of course the  assumption $\lambda/k = n$ becomes acceptable
for a fairly large $n$, namely for an \gou\ with  either a
low mean-reversion rate  or a high number of jumps.  In
other words this approximation could be used if $\lambda\gg k$,
or better when the integer part $\lfloor \lambda/k\rfloor$ is much
larger than its remainder.  On the other hand, such a
conjecture is justified by the fact that in practice every
estimation procedure presents estimation errors.

The simulation of  the $Z_a^{(m)}$, could also be
implemented starting from  the
representation\refeq{eq:densityGammaAdjoint} of their density. Over
the usual time grid  the constraint $\frac{1}{2}\le a < 1$
implies that  $k<\log2/\Delta t_m$. For instance, in energy markets and financial
applications it is common to assume $\Delta t_m=1/365$  or $\Delta
t_m<1=252$ that correspond to $k<253$ or $k<175$ respectively,
values that virtually cover all the realistic market
conditions.

Under this parameter constraint we can conceive an
acceptance-rejection procedure based on the method  of Bignami
and de Matteis\cite{BdM1971} for  pseudo-mixtures with non
positive terms (see also Devroye\cite{Dev86} page 74). Denoting
 indeed $\omega_k(a, \alpha)^+ = \max\{\omega_k(a, \alpha),
0\}$ and $\omega_k(a, \alpha)^- = \min\{\omega_k(a, \alpha), 0\}$,
 so that $\omega_k(a, \alpha) = \omega_k(a, \alpha)^+ +
\omega_k(a, \alpha)^-$, the approach of Bignami and de Matteis
relies on the  remark that from\refeq{eq:densityGammaAdjoint}
we have
\begin{equation}\label{eq:accRej}
	g_a(x) \le \sum_{k\ge 0}^{\infty}\omega_k(a, \alpha)^+ f_{k, \beta}(x) = \overline{g}(x) = c g(x)
\end{equation}
where
\begin{equation}\label{mix}
   1<c =\sum_{k\ge 0}^{\infty}\omega_k(a, \alpha)^+<\infty\qquad p_k =\frac{\omega_k(a, \alpha)^+}{c} \qquad g(x)=\sum_{k\ge 0}^{\infty}p_k f_{k,
   \beta}(x)
\end{equation}
 so that $g(x)$ turns out to be a true mixture of Erlang laws,
namely the \pdf\ of
\begin{equation*}
    V=\sum_{i=0}^{S}X_i\sim\erl_S(\beta)\quad\qquad X_i\sim\erl_1(\beta)\quad\qquad\PR{S = k} = p_k,
\end{equation*}
The generation of $\Z_a$ in the steps three and four in Algorithm \ref{alg:sd:gamma} can then be implemented employing the following acceptance-rejection solution.
\begin{algorithm}
\caption{$\frac{1}{2}\le (a <1$) }\label{alg:sd:gamma}
		\begin{algorithmic}[1]
		\State Generate $S$ with law $\PR{S = k} = p_k, k=0,\dots, N$
		\While{$u\le \frac{g_a(\bar{z})}{\bar{g}(\bar{z})}$}
		\State $u\gets U\sim\mathcal{U}[0, 1]$ \Comment{Generate a uniform}
		\State $\bar{z}\gets\bar{Z}\sim\erl(S,1)$\Comment{Generate a standard Erlang}
		\EndWhile
		
		\Return $\beta z$
		\end{algorithmic}
\end{algorithm}

The computational performance of  this algorithm can be
assessed by observing that for relatively small  values of
$\alpha$ the probability $\PR{S}=0$ is high, hence $V$ and
$Z_a$ turn out to be degenerate,  so that $Z_a$ can be
set to $0$ as well because the acceptance condition is always
satisfied.  Since on the other hand the efficiency of the
acceptance-rejection algorithm depends of the constant $c$
in\refeq{mix}, and $1/c$ roughly represents the probability of
accepting $\erl_S(1)$, it is  also preferable to have $c$ as
close  to $1$ as possible.

 Remark that for $0<\alpha\le1$ and $^1/_2\le a<1$  we always
have $\omega_0(a, \alpha)^+=a^{\alpha}\ge0.5$ with the minimum value
$0.5$ attained for $a=0.5,\;\alpha=1$, which coincides  with
the simulation of $Z\sim\erl_{\bin(1,1-a)}(1)$ (see Cufaro
Petroni and Sabino\cite{cs17}).  This means that the
concentration of the weights $\omega_k(a, \alpha)$ is mainly
around $\omega_0(a, \alpha)$  (which is a positive number) because
in the said range of $a,\alpha$ the negative coefficients
$\omega_k(a, \alpha)^-$ are rather negligible; for instance, setting
$N=40$, we find
\begin{align*}
    &\hbox{for $\alpha=0.1$ we find}\;\left\{
                                      \begin{array}{ll}
                                        c\simeq1.1311,\;\,\!^1/_c\simeq0.8841 & \;\hbox{when $\;a=0.5$} \\
                                        c\simeq1.0006,\;\,\!^1/_c\simeq0.9995 & \;\hbox{when $\;a=0.9$}
                                      \end{array}
                                    \right.
     \\
    &\hbox{for $\alpha=0.9$ we find}\;\left\{
                                      \begin{array}{ll}
                                        c\simeq1.0348,\;\,\!^1/_c\simeq0.9663 & \;\hbox{when $\;a=0.5$} \\
                                        c\simeq1.0005,\;\,\!^1/_c\simeq0.9994 & \;\hbox{when $\;a=0.9$}
                                      \end{array}
                                    \right.
\end{align*}

It is  apparent then that for $0<\alpha<1$, the
acceptance-rejection  method is very efficient because the law
of $Z_a$ is \emph{similar} to  that of $V$. If on
the other hand $\alpha>1$,  taking $n=\lfloor \alpha\rfloor$
and $\gamma$ its remainder, $Z_a$ can be also seen (and
generated) as the sum of $Z_1 + Z_2$ with
$Z_1\sim\erl_{\bin(n, 1-a)}(1)$ and $Z_2$ with \chf\ in
equation\refeq{eq:chfGammaAdjoint} with $\alpha=\gamma$. In any case our numerical
experiments  will show that $c$ is very close to $1$
also for $\alpha>1$.

We benchmark the performance of our algorithms to two alternatives available in the literature. For instance, the exact sequential
simulation of a \gou process can be achieved using the simulation procedure introduced in Lawrence \cite{L82} that coincides with the modifying Algorithm
6.2 page 174 in Cont and Tankov\cite{ContTankov2004} as detailed in Algorithm \ref{alg1:ref}
\begin{algorithm}
\caption{ }\label{alg1:ref}
\begin{algorithmic}[1]
		\For{ $m=1, \dots, M$}
		\State Generate $n\sim\poiss(\lambda\Delta t_m)$,\Comment {Poisson \rv~with intensity $\lambda\Delta t_m$}
		\State Generate $n$ \iid\ uniform \rv's $\bm{u}=(u_1, \dots, u_n)\,\sim\,\unif([0,1]^n)$.
		\State Sort $\bm{u}$, $(u_{[1]}< \dots< u_{[n]})$,
		\State $\tau_i\gets\Delta t_m u_{[i]},\, i=1,\dots n$,
		\State Generate $n$ \iid\ $J_n\sim\erl_1(\beta)$, \Comment {Exponential \rv~ with scale $\beta$}
		\State $X(t_m)\gets X(t_{m-1})e^{-k\Delta t_m} + \sum_{i=1}^{n}e^{-k(\Delta t_m-\tau_i)}J_i$.
		\EndFor
		\end{algorithmic}
\end{algorithm}
						
This solution does not directly rely on the
statistical properties described by the \chf\refeq{eq:chf:Comp:OU},
but it is rather based on the definition of the process\refeq{eq:sol:Comp:OU}. In contrast to Algorithm \ref{alg1:ref}, our approach has the obvious advantage of not requiring to draw the complete skeletons of the jump times between two time steps.

The second alternative, summarized in Algorithm \ref{alg:qdz}, is the exact simulation approach recently illustrated in Qu et al. \cite{QDZ19} that is based on Theorem \ref{th:chf:gou:qdz}.
\begin{algorithm}
\caption{ }\label{alg:qdz}
\begin{algorithmic}[1]
		\For{ $m=1, \dots, M$}
		\State Generate $n\sim\poiss(\lambda\Delta t_m)$,\Comment {Poisson \rv~with intensity $\lambda\Delta t_m$}
		\State Generate $n$ \iid\ uniform \rv's $\bm{u}=(u_1, \dots, u_n)\,\sim\,\unif([0,1]^n)$.
		\State $\beta_i\gets\beta e^{k\Delta t_m u_i}, i=1,\dots, n$.
		\State Generate $n$ \iid\ $\tilde{J}_i\sim\erl_1(\beta_i), i=1,\dots, n$, \Comment {Exponential \rv's with random rate $\beta_i$}
		\State $X(t_m)\gets X(t_{m-1})e^{-k\Delta t_m} + \sum_{i=1}^{n}\tilde{J}_i$.
		\EndFor
		\end{algorithmic}
\end{algorithm}

Algorithm \ref{alg:qdz} avoids simulating the jump times of the Poisson process as well but still requires additional steps compared to Algorithm \ref{alg:sd} which, as we will show in Section \ref{sec:sim:exper}, is by far the best performing alternative.

In addition, it is also worthwhile noticing that in the literature
several simulation algorithms based on the knowledge of the \chf\
are available (see for instance Devroye\cite{Dev86} pag 695,
Devroye\cite{Dev81} and Barabesi and Pratelli
\cite{BarabesiPratelli14}). Unfortunately, all these
algorithms require some regularity conditions on the \chf\
(absolutely integrability, absolutely continuity and absolutely
integrability of first two derivatives), that are not fulfilled by
the \chf\refeq{eq:chf:Comp:OU}.

%We conclude this section showing that the somehow tempting Euler discretization
%scheme of\refeq{eq:genOU_sde} with the assumption that only one jump
%can occur within each time step with probability $\lambda\Delta
%t$ is biased and can lead to wrong results. Indeed
%\begin{equation}\label{eq:Euler}
	%X(t_m) = X(t_{m-1})(1-k\Delta t) + B_m(1)Y_m, \quad m=1,\dots M,
%\end{equation}
%where $B_m(1)\sim\bin(1, \lambda\Delta t)$ are $m$
%independent Bernoulli \rv's. Taking then for simplicity $b =
%1-\lambda\Delta t$, the \chf\ of $B_m(1)Y_m$ is
            %\begin{equation*}
              %\varphi_m(u,t) = b + \beta\frac{1-b}{\beta - iu} = \frac{\beta - ibu}{\beta - iu}
               %= \frac{\beta - i(1-\lambda \Delta t)u}{\beta - iu}
            %\end{equation*}
%This \chf\ however could be considered as a first order
%approximation of\refeq{eq:chf:Comp:OU} only if $k=\lambda$. On the
%other hand, a reduction of the time step would by no means provide an
%improvement, and hence any calibration, or simulation of an \gou\ with the assumption that
%only one jump can occur per time step would lead to wrong and biased
%results unless $k=\lambda$.

%%%%%%%%%%%%%%%%%%%%%%%%%%%%%%%%%%%%%%%%%%%%%%%%%%%%%%%%%%%%%%%%%%%%%%%%%%%%%%%%%%%%%%%%%%%%%%%%%%%%%%%%%%%%%%%%5
\section{Distributional properties of the \bgou\ process\label{sec:bgou}}
The $\bgam$ distribution with parameters $\alpha_1, \beta_1, \alpha_2, \beta_2$ has been explored by K\"{u}chler and  Tappe \cite{KT2008} in the context of financial mathematics. Such a distribution can be seen as the law of the difference $X^{(u)}-X^{(d)}$ of two independent \rv's $X^{(u)}$, $X^{(d)}$ with $X^{(u)}\sim\gam(\alpha_1,\beta_1)$ and $X^{(u)}\sim\gam(\alpha_2,\beta_2)$. Therefore the \chf\ of the $\bgam$ distribution is
\begin{equation}
\varphi(v) = \left(\frac{\beta_1}{\beta_1-iv}\right)^{\alpha_1}\left(\frac{\beta_2}{\beta_2+iv}\right)^{\alpha_2} =\left(\varphi_u(v)\right)^{\alpha_1}\left(\varphi_d(v)\right)^{\alpha_2}
\label{eq:}
\end{equation}
K\"{u}chler and S. Tappe \cite{KT2008} have shown that such distribution is \sd\ therefore it is a suitable stationary law of a generalized \ou\ process. Based of the definition of \sd\ distributions, the \chf\ of the \arem\ of the $\bgam$\ law is
\begin{equation}
\chi_a(v) = \left(\frac{\beta_1-iav}{\beta_1-iv}\right)^{\alpha_1}\left(\frac{\beta_2+iav}{\beta_2+iv}\right)^{\alpha_2}, 0<a<1. 
\label{eq:chf:rem:bgam}
\end{equation} 
It means that the \arem\ of a $\bgam$ law with parameters $(\alpha_1,\beta_1, \alpha_2,\beta_2)$ can be seen as the difference $Z^{(u)}_a - Z^{(d)}_a$ of two independent \rv's where $Z^{(u)}_a$ and $Z^{(d)}_a$ are distributed according to the laws $\gampp(\alpha_1, \beta_1)$ and $\gampp(\alpha_2, \beta_2)$, respectively (with the same $a$). 

Now consider a \emph{BDLP} being the difference of two independent compound Poisson processes with exponential jumps $Z(t)=\sum_{n=1}^{N_1(t)}U_n - \sum_{m=1}^{N_2(t)}D_n$. $N_1(t)$ and $N_2(t)$ are two independent Poisson processes with intensities $\lambda_1$ and $\lambda_2$, respectively whereas $U_n\sim\erl_1(\beta_1)$ and $D_m\sim\bar{\erl}_1(\beta_2)$. It is easy to verify that the \chf\ of a process $X(t)$ solution of\myref{eq:genOU_sde} is simply the product of the \chf's of two independent \gou\ processes with parameters $(k, \lambda_1, \beta_1)$ and $(k, \lambda_2, \beta_2)$, respectively.
The stationary law is simply recovered for $t\rightarrow +\infty$ and coincides with a $\bgam$ law.
Once again, as in the case of a \gou\ process, the \chf\ of a \bgou\ process at time $t$ is that of the \arem\ of a $\bgam(\lambda_1/k, \beta_1, \lambda_2/k, \beta_2)$ law (dubbed $\bgampp(\lambda_1/k, \beta_1, \lambda_2/k, \beta_2)$ law) plus a constant $a x_0$ when we take $a=e^{-kt}$. 

Knowing that the $n$th cumulant $\kappa_n(X)$ of the difference $X$ of two independent \rv's $X^{(u)}$ and $X^{(d)}$ is $\kappa_n(X^{(u)})+(-1)^n\kappa_n(X^{(d)})$, after some algebra we find
\begin{eqnarray}
\EXP{Z_a} &=& (1-a)\left(\frac{\alpha_1}{\beta_1} - \frac{\alpha_2}{\beta_2}\right)\\
\VAR{Z_a} &=& (1-a^2) \left(\frac{\alpha_1}{\beta_1^2}+\frac{\alpha_2}{\beta_2^2}\right)\\
\SK{Z_a} &=&\frac{1-a^3}{(1-a^2)^{3/2}}\times\frac{2(\alpha_1\beta_2^3-\alpha_2\beta_1^3)}{(\alpha_1\beta_2^2-\alpha_2\beta_1^2)^{3/2}}\\
\KUR{Z_a} &=&\frac{1+a^2}{1-a^2}\times\frac{6(\alpha_1\beta_2^4+\alpha_2\beta_1^4)}{(\alpha_1\beta_2^2+\alpha_2\beta_1^2)^2}+3.
\label{eq:mom:bgou}
\end{eqnarray} 
moreover, $\EXP{X(t)}=ax_0 + \EXP{Z_a(t)}$ and the variance, the skewness and kurtosis of $X(t)$ and $Z_a$ coincide. 
The distributional properties of a $\bgampp$ law are summarized by the following theorem.
\begin{thm}\label{th:bgamma}
The \chf\ $\chi_a(u, \alpha_1, \beta_1, \alpha_2, \beta_2)$ and the \pdf\ $g_a(x, \alpha_1, \beta_1, \alpha_2, \beta_2)$ of the $\bgampp(\alpha_1, \beta_1, \alpha_2, \beta_2)$ law are
\begin{equation}
\chi_a(v, \alpha_1, \beta_1, \alpha_2, \beta_2)=\sum_{n,m=0}^\infty b_n(a, \alpha_1)b_m(a, \alpha_2)\varphi_u^{n}(v)\varphi_d^{m}(v)
\label{eq:chf:arem:bgam:conv}
\end{equation}
\begin{eqnarray}
g_a(x, \alpha_1, \beta_1, \alpha_2, \beta_2)&=&a^{\alpha_1+\alpha_2}\delta(x)+
a^{\alpha_1}(1-a)f_{n,^{\beta_1}/_a}(x) + a^{\alpha_2}(1-a)f_{m,^{\beta_2}/_a}(-x)+\nonumber\\
&&\sum_{n,m=1}^\infty b_n(a, \alpha_1)b_m(a, \alpha_2)f_{n, m,^{\beta_1}/_a, ^{\beta_2}/_a}(x),
\label{eq:pdf:arem:bgam:conv}
\end{eqnarray}
with
\begin{equation*}
	b_n(a, \alpha) = \binom{\alpha+n-1}{n}a^{\alpha}(1-a)^{n}
\end{equation*}
and
\begin{equation}\label{eq:erl:diff:pdf}                
	f_{n,m,\beta_1,\beta_2}(x)=\left\{
					 \begin{array}{c}
						  \frac{\beta_2 e^{\beta_2 x}}{(n-1)!}\left(\frac{\beta_1}{\beta_1+\beta_2}\right)^{n}\sum_{i=0}^{m-1}\frac{(n+m-i-2)!}{i!(m-i-1)!}
							\times\\
							\left(\frac{\beta_2}{\beta_1+\beta_2}\right)^{m-i-1}(-\beta_2 x)^{i} \quad x<0, \\
						 \frac{\beta_1 e^{-\beta_1 x}}{(m-1)!}\left(\frac{\beta_2}{\beta_1+\beta_2}\right)^{m}\sum_{j=0}^{n-1}\frac{(n+m-j-2)!}{\ell!(n-j-1)!}
						\times\\
						\left(\frac{\beta_1}{\beta_1+\beta_2}\right)^{n-j-1}(\beta_1 x)^{j} \quad x\ge 0
					 \end{array}
				 \right.,
\end{equation}
where
$f_{n,m,\beta_1,\beta_2}(x)$ represents the \pdf\ of the difference $E_u-E_d$ of two independent Erlang distributed rv's $E_u\sim\erl_n(\beta_1)$ and $E_d\sim\erl_d(\beta_2)$, respectively (see Simon \cite{Simon06} page 28).
\end{thm}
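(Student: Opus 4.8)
The plan is to obtain the characteristic function by factoring the \arem\ of a $\bgam$ law into the difference of two independent $\gampp$ laws, as already observed just before the statement, and then to expand each factor through the Polya mixture of Theorem \ref{th:arem:density}. Concretely, I would start from\refeq{eq:chf:rem:bgam} and write
\begin{equation*}
\chi_a(v,\alpha_1,\beta_1,\alpha_2,\beta_2)=\left(\frac{\beta_1-iav}{\beta_1-iv}\right)^{\alpha_1}\left(\frac{\beta_2+iav}{\beta_2+iv}\right)^{\alpha_2}=\chi_a(v;\alpha_1,\beta_1)\,\overline{\chi_a(v;\alpha_2,\beta_2)},
\end{equation*}
where the second factor is the \chf\ of a reflected \arem. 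Applying Theorem \ref{th:arem:density} to each factor gives $\chi_a(v;\alpha_1,\beta_1)=\sum_{n\ge0}b_n(a,\alpha_1)\varphi_u^n(v)$ and, by the analogous reflected identity, $\overline{\chi_a(v;\alpha_2,\beta_2)}=\sum_{m\ge0}b_m(a,\alpha_2)\varphi_d^m(v)$, with $\varphi_u(v)=\beta_1/(\beta_1-iv)$ and $\varphi_d(v)=\beta_2/(\beta_2+iv)$. Multiplying the two absolutely convergent series (each converges since $|\varphi_u|,|\varphi_d|\le1$ and $\sum b_n(a,\alpha)=1$) and using the Cauchy product yields\refeq{eq:chf:arem:bgam:conv} directly; interchange of the two sums is justified by absolute convergence.

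For the density\refeq{eq:pdf:arem:bgam:conv} I would invert term by term. The term $n=m=0$ contributes the atom $a^{\alpha_1+\alpha_2}\delta(x)$; the terms with exactly one index zero contribute, after rescaling by $a$ as in Theorem \ref{th:arem:density}, the two one-sided pieces $a^{\alpha_1}(1-a)f_{n,\beta_1/a}$ and $a^{\alpha_2}(1-a)f_{m,\beta_2/a}(-x)$ (here the displayed formula is really summing the $n\ge1$, $m=0$ and $n=0$, $m\ge1$ contributions, with the scale parameter $\beta_j/a$ coming from $\varphi_u(v)=\beta_1/(\beta_1-iv)$ evaluated with the $a$ absorbed exactly as in\refeq{eq:polya:gamma:chf}). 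For $n,m\ge1$ each product $\varphi_u^n(v)\varphi_d^m(v)$ is the \chf\ of the difference $E_u-E_d$ of independent Erlang $\erl_n(\beta_1/a)$ and $\erl_m(\beta_2/a)$ variables, whose density is exactly $f_{n,m,\beta_1/a,\beta_2/a}$ given by\refeq{eq:erl:diff:pdf}; this last identification is the content of Simon \cite{Simon06}, which I would simply cite. Summing over $n,m\ge1$ with weights $b_n(a,\alpha_1)b_m(a,\alpha_2)$ gives the remaining series.

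The only genuine obstacle is the explicit closed form\refeq{eq:erl:diff:pdf} for the \pdf\ of a difference of two independent Erlang variables with different rates: it is obtained by a partial-fraction decomposition of $\varphi_u^n\varphi_d^m$ followed by Fourier inversion of each simple-pole block, which produces the finite sums over $i$ and $j$ and the polynomial-times-exponential shape on each half-line. Since this computation (and the resulting formula) is already available in Simon \cite{Simon06}, I would not reproduce it; the remaining steps — the Cauchy product for the \chf\ and the bookkeeping of the degenerate and one-sided boundary terms — are routine once the convergence of the mixtures from Theorem \ref{th:arem:density} is invoked.
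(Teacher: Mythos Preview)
Your proposal is correct and follows essentially the same route as the paper: both arguments factor the $\bgampp$ law as the difference $Z_a^{(u)}-Z_a^{(d)}$ of two independent $\gampp$ \arem's, invoke the Polya mixture representation of Theorem~\ref{th:arem:density} for each factor, and then read off the density by conditioning on the two Polya indices and citing Simon~\cite{Simon06} for the closed-form \pdf\ of an Erlang difference. Your write-up is in fact slightly more careful than the paper's (you spell out the Cauchy product and the absolute-convergence justification, and you parse the boundary cases $n=0$ or $m=0$ explicitly), but the underlying idea is identical.
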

\begin{proof}
As already observed, the law $\gampp(\alpha_1,\beta_1, \alpha_2,\beta_2)$ is the law of the difference $Z^{(u)}_a - Z^{(d)}_a$ of two independent \rv's $Z^{(u)}_a$ and $Z^{(d)}_a$ distributed according to the $\gampp(\alpha_1, \beta_1)$ and  $\gampp(\alpha_2, \beta_2)$, respectively. Hence, the \chf\ in\refeq{eq:chf:arem:bgam:conv} is a simple consequence of Theorem \ref{th:arem:density}.

On the other hand, the distributions of $Z^{(u)}_a$ and $Z^{(d)}_a$ can also be considered as two independent Erlang laws $\erl_{S_u}(\beta_1)$ and $\erl_{S_d}(\beta_2)$ with two independent Polya $\pol(\alpha_1,1-a)$ and $\pol(\alpha_2,1-a)$ distributed random indexes $S_u$ and $S_d$, respectively.
Hence, given $S_u=n$ and $S_d=m$ the distribution can be seen as the law of the difference of two independent Erlang \rv's whose \pdf\refeq{eq:erl:diff:pdf} is known in closed form. Combining all these observation leads to the conclusion that the \pdf\ of $Z_a$ is given by\refeq{eq:pdf:arem:bgam:conv}.
\end{proof}
\begin{cor}
The transition density $p(x, t+s| y, s) $of a \bgou\ law with parameters $\alpha_1=\lambda_1/k$, $\beta_1$, $\alpha_2=\lambda_2/k$ and $\beta_2$  is
\begin{equation}
p(x, t+s| y, s) = g_a\left(x-ay, \frac{\lambda_1}{k}, \beta_1, \frac{\lambda_2}{k}, \beta_2\right), \quad a=e^{-kt}. 
\label{eq:bgou:tdensity}
\end{equation}
where $g_a\left(x-ay, \frac{\lambda_1}{k}, \beta_1, \frac{\lambda_2}{k}, \beta_2\right)$ is defined in\refeq{eq:pdf:arem:bgam:conv}.
\end{cor}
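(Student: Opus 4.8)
The plan is to derive\refeq{eq:bgou:tdensity} from the Markov structure of the solution of\refeq{eq:genOU_sde}, in exact parallel with the two corollaries of the previous section that gave the transition density of the \gou\ process, once the closed form\refeq{eq:pdf:arem:bgam:conv} of the $\bgampp$ density provided by Theorem\myref{th:bgamma} is at hand.

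First I would identify the conditional law of $X(t+s)$ given $X(s)=y$. Writing the \bgou\ process as the difference $X=X^{(u)}-X^{(d)}$ of the two independent \gou\ processes with parameters $(k,\lambda_1,\beta_1)$ and $(k,\lambda_2,\beta_2)$ --- the representation already used in Section\myref{sec:bgou} --- and applying Theorem\myref{th:chf:gou} to each component (with $v$ replaced by $-v$ in the downward one), the independence of $X^{(u)}$ and $X^{(d)}$ gives
\begin{equation*}
\EXP{e^{ivX(t+s)}\,|\,X(s)}=e^{ivX(s)e^{-kt}}\left(\frac{\beta_1-ive^{-kt}}{\beta_1-iv}\right)^{\lambda_1/k}\left(\frac{\beta_2+ive^{-kt}}{\beta_2+iv}\right)^{\lambda_2/k}.
\end{equation*}
The right-hand side depends on $\big(X^{(u)}(s),X^{(d)}(s)\big)$ only through $X(s)=X^{(u)}(s)-X^{(d)}(s)$, so conditioning on $X(s)$ alone is enough; writing $a=e^{-kt}$ it factors as $e^{ivaX(s)}\,\chi_a(v,\lambda_1/k,\beta_1,\lambda_2/k,\beta_2)$ with $\chi_a$ the \chf\ in\refeq{eq:chf:rem:bgam}. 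Consequently, given $X(s)=y$, the \rv\ $X(t+s)$ has the law of $ay+Z_a$, where $Z_a\sim\bgampp(\lambda_1/k,\beta_1,\lambda_2/k,\beta_2)$ is independent of $X(s)$.

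Next I would pass from characteristic functions to densities. Since $Z_a$ is independent of $X(s)$ and, on the event $X(s)=y$, the term $ay$ is a deterministic shift, the conditional \pdf\ of $X(t+s)$ equals the \pdf\ of $Z_a$ translated by $ay$, that is $p(x,t+s\,|\,y,s)=g_a(x-ay,\lambda_1/k,\beta_1,\lambda_2/k,\beta_2)$. Substituting for $g_a$ its explicit form\refeq{eq:pdf:arem:bgam:conv} from Theorem\myref{th:bgamma} --- the atom $a^{\alpha_1+\alpha_2}\delta(x)$ at the origin (with $\alpha_i=\lambda_i/k$), the two one-sided Erlang contributions, and the double series built on the difference-of-Erlang densities\refeq{eq:erl:diff:pdf} --- then yields\refeq{eq:bgou:tdensity}, which completes the argument.

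No real obstacle is left at this point: the analytic content --- the closed-form density of a difference of two independent Erlang variables and the Polya resummation underlying\refeq{eq:chf:arem:bgam:conv} and\refeq{eq:pdf:arem:bgam:conv} --- has already been established in Theorem\myref{th:bgamma}, so the corollary reduces to bookkeeping with the Markov property and with the factorization of the conditional \chf\ into its two \gou\ factors. The only details worth a line of care are the sign convention in the downward \gou\ factor (which is what turns $\beta_2-iv$ into $\beta_2+iv$) and the remark, as in the \gou\ case, that any concrete use of\refeq{eq:bgou:tdensity} requires truncating the double sum in\refeq{eq:pdf:arem:bgam:conv}; this is harmless because the weights $b_n(a,\lambda_1/k)\,b_m(a,\lambda_2/k)$ decay geometrically in $n$ and $m$, up to polynomial factors.
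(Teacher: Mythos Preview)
Your proposal is correct and follows exactly the route the paper has in mind: the corollary is stated without proof, but the surrounding text already records that the conditional \chf\ of the \bgou\ process factors as $e^{ivaX(s)}\chi_a(v,\lambda_1/k,\beta_1,\lambda_2/k,\beta_2)$ (the product of the two \gou\ factors from Theorem\myref{th:chf:gou}), so that $X(t+s)\eqd ay+Z_a$ with $Z_a\sim\bgampp$, and Theorem\myref{th:bgamma} supplies the density $g_a$. Your added remark that the conditional expectation given $(X^{(u)}(s),X^{(d)}(s))$ depends only on their difference, hence descends to conditioning on $X(s)$, is the one point the paper glosses over and is handled correctly.
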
 	

An interesting case is when the \emph{BDLP} $Z(t)$ is a compound Poisson whose jumps are now distributed according to a double exponential law that is mixture of a positive exponential \rv\ $U\sim\erl_1(\beta_1)$ and a negative exponential \rv\ $D\sim\bar{\erl}_1(\beta_2)$ with mixture parameters $p$ and $q=1-p$ with the following \pdf\ and \chf
\begin{equation}
	f_{\beta_1,\beta_2, p}(x)=p\beta_1e^{-\beta_1 x}\mathds{1}_{x\ge 0} + (1-p)\beta_2e^{\beta_2 x}\mathds{1}_{x< 0}
\label{eq:double:exp:pdf}
\end{equation} 
\begin{equation}\label{eq:double:exp:chf}  
\varphi_{\beta_1,\beta_2, p}(v)  =
  p\frac{\beta_1}{\beta_1-iv} + (1-p) \frac{\beta_2}{\beta_2+iv} = p\varphi_u(v) + (1-p)\varphi_d(v).
\end{equation}
\begin{thm}\label{th:chf:bgou}
Let $X(t)$ be the solution  of\refeq{eq:genOU_sde} where the \emph{BDLP} is a compound Poisson whose jumps are distributed according to the law with \pdf\ and \chf\ in\refeq{eq:double:exp:chf} and\refeq{eq:double:exp:pdf}, respectively, then the \chf\ of $X(t+s)$ conditional on $X(s)$ is given by 
\begin{equation}
\EXP{e^{ivX(t+s)|X(s)}}=e^{ivX(s)e^{-kt}}\times \left(\frac{\beta_1 - ive^{-kt}}{\beta_1 - iv}\right)^{\frac{p\lambda}{k}} \times 
\left(\frac{\beta_2 + ive^{-kt}}{\beta_2 + iv}\right)^{\frac{(1-p) \lambda}{k}} 
\label{eq:chfbgou}
\end{equation}
\end{thm}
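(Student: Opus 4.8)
The plan is to reduce \refeq{eq:chfbgou} to the one--sided \gou\ formula by a thinning (colouring) argument on the compound Poisson \emph{BDLP}. Since the double--exponential jump law \refeq{eq:double:exp:pdf} is the two--point mixture of $\erl_1(\beta_1)$ with weight $p$ and $\bar{\erl}_1(\beta_2)$ with weight $1-p$, labelling each arrival of the driving Poisson process ``up'' with probability $p$ and ``down'' with probability $1-p$ shows that the \emph{BDLP} decomposes as
\begin{equation*}
Z(t)\eqd Z_u(t)-Z_d(t),\qquad\Pqo,
\end{equation*}
where $Z_u$ and $Z_d$ are \emph{independent} compound Poisson processes, $Z_u$ with intensity $p\lambda$ and jumps $\erl_1(\beta_1)$, and $Z_d$ with intensity $(1-p)\lambda$ and jumps $\erl_1(\beta_2)$. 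Inserting this into the explicit solution \refeq{eq:sol:Comp:OU} and using the linearity of \refeq{eq:genOU_sde} gives $X(t)=X^{(u)}(t)-X^{(d)}(t)$, where $X^{(u)}$ is the \gou$(k,p\lambda,\beta_1)$ process started at $x_0$ and $X^{(d)}$ is an independent \gou$(k,(1-p)\lambda,\beta_2)$ process started at $0$, their independence being inherited from $Z_u\perp Z_d$.

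Next I would push the conditioning through. Over $[s,s+t]$ each component satisfies $X^{(j)}(t+s)=X^{(j)}(s)\,e^{-kt}+\Delta^{(j)}$ with $\Delta^{(j)}=\int_s^{s+t}e^{-k(s+t-r)}\,dZ_j(r)$ for $j\in\{u,d\}$, and by the independence of the increments of $Z_u$ and $Z_d$ the pair $(\Delta^{(u)},\Delta^{(d)})$ is independent of $(X^{(u)}(s),X^{(d)}(s))$, hence of $X(s)=X^{(u)}(s)-X^{(d)}(s)$, while also $\Delta^{(u)}\perp\Delta^{(d)}$. Therefore $X(t+s)=X(s)\,e^{-kt}+\big(\Delta^{(u)}-\Delta^{(d)}\big)$ with the remainder independent of $X(s)$, so that
\begin{equation*}
\EXP{e^{ivX(t+s)}\mid X(s)}=e^{ivX(s)e^{-kt}}\;\EXP{e^{iv\Delta^{(u)}}}\;\EXP{e^{-iv\Delta^{(d)}}}.
\end{equation*}
By stationarity of the increments, $\Delta^{(u)}$ and $\Delta^{(d)}$ are distributed as a \gou$(k,p\lambda,\beta_1)$ and a \gou$(k,(1-p)\lambda,\beta_2)$ process at time $t$ started from $0$; hence Theorem \ref{th:chf:gou} (equivalently \refeq{eq:chf:Comp:OU}) gives $\EXP{e^{iv\Delta^{(u)}}}=\big((\beta_1-ive^{-kt})/(\beta_1-iv)\big)^{p\lambda/k}$ and $\EXP{e^{iv\Delta^{(d)}}}=\big((\beta_2-ive^{-kt})/(\beta_2-iv)\big)^{(1-p)\lambda/k}$, and replacing $v$ by $-v$ in the latter yields $\EXP{e^{-iv\Delta^{(d)}}}=\big((\beta_2+ive^{-kt})/(\beta_2+iv)\big)^{(1-p)\lambda/k}$. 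Multiplying the three factors gives exactly \refeq{eq:chfbgou}.

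The only genuinely delicate point is the bookkeeping of the conditioning: one must check that conditioning on the single variable $X(s)$, rather than on the pair $(X^{(u)}(s),X^{(d)}(s))$, still factorises the characteristic function --- which is precisely what the additive decomposition $X(t+s)=X(s)e^{-kt}+(\Delta^{(u)}-\Delta^{(d)})$ with an $X(s)$--independent remainder guarantees; everything else is routine. As a cross--check, and as an alternative proof that bypasses the decomposition altogether, one may compute the characteristic function of $\int_0^te^{-k(t-r)}\,dZ(r)$ directly as $\exp\!\big(\lambda\int_0^t(\varphi_{\beta_1,\beta_2,p}(ve^{-k(t-r)})-1)\,dr\big)$, carry out the substitution $w=e^{-k(t-r)}$, and integrate term by term the two elementary rational summands of $\varphi_{\beta_1,\beta_2,p}-1$ read off from \refeq{eq:double:exp:chf}; the two resulting logarithms exponentiate into the two power factors of \refeq{eq:chfbgou}.
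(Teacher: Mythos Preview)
Your argument is correct. The thinning/colouring decomposition of the double--exponential compound Poisson into two independent one--sided compound Poissons is standard, and once you have $X(t+s)=X(s)e^{-kt}+\Delta^{(u)}-\Delta^{(d)}$ with an $X(s)$--independent remainder, the factorisation of the conditional \chf\ and the application of Theorem~\ref{th:chf:gou} to each piece are unproblematic.

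The paper's own proof is in fact your ``cross--check'': it invokes the general formula (from Dassios--Jang and Kluge) $\log\EXP{e^{ivX(t+s)}\mid X(s)}=ivX(s)e^{-kt}+\lambda\int_0^t\bigl(\varphi_J(ve^{-kw})-1\bigr)\,dw$ for the jump \chf\ $\varphi_J=\varphi_{\beta_1,\beta_2,p}$, splits the integrand linearly as $p(\varphi_u-1)+(1-p)(\varphi_d-1)$ using\refeq{eq:double:exp:chf}, and evaluates the two elementary integrals to obtain the two power factors in\refeq{eq:chfbgou}. So the difference is one of emphasis: you decompose the \emph{process} first and then quote the one--sided \gou\ result, whereas the paper decomposes the \emph{integrand} and integrates directly. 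Your route has the advantage of making the probabilistic structure (independent up/down components) explicit and of reusing Theorem~\ref{th:chf:gou} rather than redoing the calculus; the paper's route is slightly shorter and does not need the thinning lemma, but relies on the cited integral representation of the conditional \chf\ as a black box.
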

\begin{proof}
Based on the results of Dassios and Jang \cite{DJ03} and \cite{Kluge2006},  the logarithm of \chf\ of $X(t+s)$ conditional on $X(s)$ is given by 
				\begin{equation*}
				\log\EXP{e^{ivX(t+s)|X(s)}} = ivX(s)e^{-kt} + \lambda\int_0^t\left(\varphi_J(ve^{-kw}) - 1\right)dw,
				\end{equation*}
				where $\varphi_J(v)$ is the \chf\ of the double exponential in\refeq{eq:double:exp:chf}, therefore we have
				\begin{eqnarray*}
				\log\EXP{e^{ivX(t+s)|X(s)}} &=& ivX(s)e^{-kt} + p\lambda\int_0^t\left(\varphi_u(ve^{-kw}) - 1\right)dw + \\
				&&(1-p)\lambda\int_0^t\left(\varphi_d(ve^{-kw}) - 1\right)dw,
				\end{eqnarray*}
				hence solving the integrals in the second and third terms we have
				\begin{equation}\label{eq:chf:ou:double:exp}
				\EXP{e^{ivX(t+s)|X(s)}}=e^{iuX(s)e^{-kt}}\times\left(\frac{\beta_1 - ive^{-k t}}{\beta_1 -iv}\right)^{\frac{p\lambda}{k}}\left(\frac{\beta_2 + ive^{-k t}}{\beta_2 +iv}\right)^{\frac{(1-p)\lambda}{k}},
				\end{equation}
				that concludes the proof.
\end{proof}
The stationary law is simply recovered for $t\rightarrow +\infty$ and coincides with a $\bgam$ law as summarized by the following corollary.
\begin{cor}
The stationary law of $X(t)$ is a $\bgam$ law with parameters $\alpha_1=p\lambda/k$, $\beta_1$, $\alpha_2=(1-p)\lambda/k$ and $\beta_2$ with $0<p<1$.
\end{cor}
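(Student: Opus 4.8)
The plan is to read off the claim as the $t\to+\infty$ limit of the conditional characteristic function \refeq{eq:chfbgou} supplied by Theorem \ref{th:chf:bgou}. Since $k>0$ we have $e^{-kt}\to 0$, so for every fixed $v\in\R$ the prefactor $e^{ivX(s)e^{-kt}}\to 1$; this is the key point, because it shows that the limit does not depend on the conditioning value $X(s)$, which is exactly what one expects of a stationary regime. Simultaneously $\beta_1-ive^{-kt}\to\beta_1$ and $\beta_2+ive^{-kt}\to\beta_2$, and therefore
\begin{equation*}
\lim_{t\to+\infty}\EXP{e^{ivX(t+s)}|X(s)}=\left(\frac{\beta_1}{\beta_1-iv}\right)^{\frac{p\lambda}{k}}\left(\frac{\beta_2}{\beta_2+iv}\right)^{\frac{(1-p)\lambda}{k}}.
\end{equation*}

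Next I would identify the right-hand side with the characteristic function of a $\bgam$ law. Comparing it term by term with the $\bgam$ characteristic function $\varphi(v)=\left(\frac{\beta_1}{\beta_1-iv}\right)^{\alpha_1}\left(\frac{\beta_2}{\beta_2+iv}\right)^{\alpha_2}$ recalled at the opening of Section \ref{sec:bgou}, the expressions agree precisely upon setting $\alpha_1=p\lambda/k$ and $\alpha_2=(1-p)\lambda/k$; the hypothesis $0<p<1$ guarantees $\alpha_1,\alpha_2>0$, so these are admissible $\bgam$ parameters. Finally, to promote the pointwise convergence of the characteristic functions to convergence in law — and hence to conclude that the limiting $\bgam$ law is indeed the stationary distribution of $X(t)$ — I would invoke L\'evy's continuity theorem (the limit is a characteristic function, continuous at $v=0$), together with the standard fact that for a L\'evy-driven \ou\ process the weak limit of the transition law as $t\to+\infty$ is precisely the invariant law; since this limit is independent of $X(s)$, it is the unique stationary law.

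I do not expect a genuine obstacle here: the argument is essentially a one-line passage to the limit. The only point requiring a word of care is the interchange of the limit with the fractional powers, i.e.\ the continuity of $z\mapsto z^{\alpha}$ on the relevant domain; this is unproblematic because for $v\in\R$ the bases $\frac{\beta_1-ive^{-kt}}{\beta_1-iv}$ and $\frac{\beta_2+ive^{-kt}}{\beta_2+iv}$ stay away from the negative real axis (indeed they tend to $1$), so the principal branch is continuous there and the limit may be taken inside.
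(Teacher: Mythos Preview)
Your proposal is correct and follows exactly the route indicated in the paper: the paper does not give a formal proof of this corollary but simply remarks that ``the stationary law is simply recovered for $t\rightarrow +\infty$'' in the conditional \chf\ of Theorem~\ref{th:chf:bgou}, which is precisely the limit you compute. Your additional care with L\'evy's continuity theorem and the continuity of the principal branch is more detail than the paper supplies, but the approach is the same.
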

Once again, the \chf\ of a \bgou\ process at time $t$ is that of the \arem\ of a $\bgam(p\lambda/k, \beta_1, (1-p)\lambda/k, \beta_2)$ law plus a constant $a x_0$ when we take $a=e^{-kt}$. 
%%%%%%%%%%%%%%%%%%%%%%%%%%%%%%%%%%%%%%%%%%%%%%%%%%%%%%%%%%%%%%%%%%%%%%%%%%%%%%%%%%%%%%%%%%%%%%%%%%%%%%%%%%%%%%%%%%
\subsection{Symmetric $\bgam$\label{subsec:simgou}}
The results of the previous subsection simplify when the \bgou\ process has symmetric parameters where the stationary law is a symmetric bilateral gamma. 
In this case the \emph{BDLP} coincides with a compound Poisson whose jumps are distributed according to a centered Laplace law. 
A simple consequence of Theorem \ref{th:bgamma} and Theorem \ref{th:chf:bgou} with $p=1/2$, $\beta_1=\beta_2$ is the following corollary
\begin{cor}\label{cor:sbgou}
	The \chf\ and the \pdf\ of the \arem\ of a symmetric $\bgam(\alpha, \beta)$ are
	\begin{equation*}
		\chi_a(u) = \sum_{k=0}^\infty\binom{\alpha+k-1}{k} a^{2\alpha}\left(1-a^2\right)^k\left(\frac{\beta^2}{\beta^2+a^2u^2}\right)^k
	\end{equation*}
	\begin{equation*}
	g_a(x, \alpha, \beta)=a^{2\alpha}\delta(x)+\sum_{k=1}^\infty \binom{\alpha+k-1}{k} a^{2\alpha}\left(1-a^2\right)^k\bar{f}_{n,\beta/_a}(x)
	\end{equation*}
	where
	\begin{equation*}
		\bar{f}_{n,\beta}(x) = \frac{\beta}{2^n(n-1)!}\left(\beta|x|\right)^{n-1}e^{-\beta |x|}\sum_{k=1}^{n-1}\frac{(n-1+k)!}{k!(n-1-k)!(2\beta)^k|x|^k}.
	\end{equation*}
\end{cor}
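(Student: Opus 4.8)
The plan is to specialize the general bilateral formulas of Theorem \ref{th:bgamma} to the symmetric case $\alpha_1=\alpha_2=\alpha$, $\beta_1=\beta_2=\beta$, $p=1/2$ (equivalently $\lambda_1=\lambda_2$), and then collapse the double sum into a single sum. First I would start from the characteristic function \refeq{eq:chf:arem:bgam:conv} of the $\bgampp$ law. With equal parameters, $\chi_a(v)=\sum_{n,m\ge0}b_n(a,\alpha)b_m(a,\alpha)\,\varphi_u^n(v)\varphi_d^m(v)$, where $\varphi_u(v)=\beta/(\beta-iv)$, $\varphi_d(v)=\beta/(\beta+iv)$. The key observation is that it is cleaner to go back one step: from \refeq{eq:chf:rem:bgam} with equal parameters,
\begin{equation*}
\chi_a(v)=\left(\frac{\beta-iav}{\beta-iv}\right)^{\alpha}\left(\frac{\beta+iav}{\beta+iv}\right)^{\alpha}=\left(\frac{\beta^2+a^2v^2}{\beta^2+v^2}\right)^{\alpha},
\end{equation*}
and then rewrite this exactly as in the proof of Theorem \ref{th:arem:density}: factor out $a^{2\alpha}$ and expand by the generalized binomial formula in the variable $(1-a^2)\,\beta^2/(\beta^2+a^2v^2)$. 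Indeed $\beta^2+a^2v^2=a^2(\beta^2+v^2)+(1-a^2)\beta^2$, so
\begin{equation*}
\chi_a(v)=\left(\frac{a^2}{1-(1-a^2)\frac{\beta^2}{\beta^2+a^2v^2}}\right)^{\alpha}=\sum_{k=0}^{\infty}\binom{\alpha+k-1}{k}a^{2\alpha}(1-a^2)^k\left(\frac{\beta^2}{\beta^2+a^2v^2}\right)^k,
\end{equation*}
which is the stated expression; convergence is guaranteed because $(1-a^2)\beta^2/(\beta^2+a^2v^2)\le 1-a^2<1$ for $0<a<1$.

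For the density, the plan is to identify each term $\big(\beta^2/(\beta^2+a^2v^2)\big)^k$ as the \chf\ of a recognizable law. Note $\beta^2/(\beta^2+a^2v^2)=\varphi_u(av)\varphi_d(av)$ is the \chf\ of the difference $E_u-E_d$ of two independent $\erl_1(\beta/a)$ variables, i.e.\ a Laplace law with scale $\beta/a$; hence its $k$-th power is the \chf\ of the difference of two independent $\erl_k(\beta/a)$ variables, whose density is exactly $f_{k,k,\beta/a,\beta/a}$ from \refeq{eq:erl:diff:pdf} with $\beta_1=\beta_2=\beta/a$. So I would substitute $\beta_1=\beta_2=\beta/a$, $n=m=k$ into \refeq{eq:erl:diff:pdf} and carry out the algebra: the ratios $\beta_1/(\beta_1+\beta_2)$ and $\beta_2/(\beta_1+\beta_2)$ both become $1/2$, the two branches for $x\ge0$ and $x<0$ become mirror images under $x\mapsto-x$, the prefactors combine to $\frac{(\beta/a)}{2^k(k-1)!}(\beta|x|/a)^{k-1}e^{-\beta|x|/a}$, and the finite sum over $i$ (or $j$) reduces to $\sum_{i=0}^{k-1}\frac{(2k-i-2)!}{i!(k-i-1)!2^{-i}}(\beta|x|/a)^{-i}$ after reindexing. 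Matching this against the claimed $\bar f_{k,\beta/a}(x)$ — with the understanding that the index letter $n$ in the corollary plays the role of $k$ and that the empty/singleton terms for $k=1$ are handled separately — gives the density formula, with the degenerate mass $a^{2\alpha}\delta(x)$ coming from the $k=0$ term. (The $k=1$ term, where the inner sum in \refeq{eq:erl:diff:pdf} is empty, reproduces the plain Laplace density $\tfrac{\beta}{2a}e^{-\beta|x|/a}$, consistent with the general $n=m=1$ cross term appearing in \refeq{eq:pdf:arem:bgam:conv}.)

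The main obstacle is the combinatorial bookkeeping in the last step: showing that the closed-form difference-of-Erlangs density \refeq{eq:erl:diff:pdf}, which is written asymmetrically in $(n,\beta_1)$ versus $(m,\beta_2)$, really does collapse to the symmetric form $\bar f_{n,\beta}$ when $n=m$ and $\beta_1=\beta_2$. One must verify that the double-indexed factorials $(n+m-i-2)!/(i!(m-i-1)!)$ specialize correctly and that the two piecewise branches glue into a single $e^{-\beta|x|/a}$-type expression in $|x|$; this is a finite but slightly delicate rearrangement, and it is also the point where one should double-check the precise range of the inner summation index (the corollary writes $\sum_{k=1}^{n-1}$, which for $n=1$ is empty, matching the degenerate-plus-Laplace structure). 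Everything else — the \chf\ expansion, convergence, and the probabilistic interpretation as an Erlang difference with Polya-distributed random orders — follows directly by specialization from Theorems \ref{th:arem:density} and \ref{th:bgamma}.
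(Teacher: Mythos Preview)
Your approach is correct and matches the paper's: the paper states only that the corollary is ``a simple consequence of Theorem~\ref{th:bgamma} and Theorem~\ref{th:chf:bgou} with $p=1/2$, $\beta_1=\beta_2$'' and gives no further details, so your specialization argument---rewriting $\chi_a(v)=\big((\beta^2+a^2v^2)/(\beta^2+v^2)\big)^\alpha$ and expanding via the negative-binomial series exactly as in Theorem~\ref{th:arem:density}, then identifying each term as the \chf\ of a symmetric Erlang difference---is precisely the intended route. Your observation about the combinatorial bookkeeping (and the $k=0$ versus $k=1$ lower limit of the inner sum in $\bar f_{n,\beta}$) is well placed; that reindexing is indeed the only nontrivial step, and it is omitted in the paper.
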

Once more an the law of the \arem\ is an infinite Polya $\pol(\alpha,1-a^2)$-weighted
mixture of bilateral Erlang laws with parameter $\beta/a$.

Hence, taking $a=e^{-kt}$ and
$\alpha=\frac{\lambda}{2k}$, the law of a symmetric \bgou\ at time $t$ coincides with the \chf\
of the \arem\ law of a gamma difference whose transition density $p(x, t+s| y, s)=g_a(x-ay, \frac{\lambda}{2k}, \beta)$. 
In addition,
\begin{eqnarray}
\EXP{Z_a} &=& 0 \\
\VAR{Z_a} &=& (1-a^2) \times\frac{2\alpha}{\beta}\\
\SK{Z_a} &=& 0 \\
\KUR{Z_a} &=&\frac{1+a^2}{1-a^2}\times\frac{3}{\alpha}+3,
\label{eq:mom:bgou:symm}
\end{eqnarray} 
$\EXP{X(t)}=ax_0$ while the variance, the skewness and kurtosis of $X(t)$ and $Z_a$ coincide because these quantities are translation invariant. 
Finally, we remark that for $\alpha=n\in\mathbf{N^*}$ it is straightforward to extend Theorem \ref{th:arem:bin:density} and to represent the \arem\ of a symmetric $\bgam$ as a binomial mixture of bilateral Erlang laws. It suffices to replace $\beta/(\beta-iu)$ in\refeq{eq:bin:erl:chf} with $\beta^2/(\beta^2+u^2)$ and $f_{n,\beta}(x)$ with $\bar{f}_{n,\beta}(x)$ in\refeq{eq:mix:bin}. Finally, the representation based on the generalized binomial theorem at the end of subsection \ref{subsec:erlangRV} can also be extended to the case of symmetric $\bgam$ laws replacing $\omega_k(a, \alpha$) with $\omega_k(a^2, \alpha)$ under the constrain $\frac{1}{\sqrt{2}}\le a <1$. 

%%%%%%%%%%%%%%%%%%%%%%%%%%%%%%%%%%%%%%%%%%%%%%%%%%%%%%%%%%%%%%%%%%%%%%%%%%%%%%%%%%%%%%%%%%%%%%%%%%%%%%%%%%%%%%%%%%%%%%%%%%%%%%%%%%%%%%%%%%%%%%%%%%%%%%%%%%%%%%%%%%%%%%%%%%%%%%%%%%%
\subsection{Simulation Algorithms\label{subsec:simulation:bgou}}
We have seen that the law of the \arem\  $\bgampp(\beta_1, \alpha_1, \beta_2, \alpha_2)$ coincides with that of the difference of the independent \arem's  $\gampp(\beta_1, \alpha_1)$ and $\gampp(\beta_2, \alpha_2)$, respectively. We have also observed that such a distribution coincides with the law at time $t$ of the \bgou$(k,\lambda_1, \beta_1, \lambda_2, \beta_2)$ process if one sets $\alpha_1=\lambda_1/k$ $\alpha_2=\lambda_2/k$ and $a=e^{-kt}$. Based on Theorems \ref{th:chf:gou} and \ref{th:chf:gou:qdz} then, the simulation of the increment of such a process consists of nothing less than implementing the algorithms detailed in section \ref{subsec:simulation:gou} two times. To this end, for sake of brevity, the detailed steps are not repeated here.  

In contrast, we here detail some simulation algorithms tailored to the symmetric case. For instance, because of Corollary \ref{cor:sbgou}, the implementation steps of Algorithm \ref{alg:sd} can be replaced by the following ones.
\begin{algorithm}
\caption{ }\label{alg:bgousd}
		\begin{algorithmic}[1]
		\For{ $m=1, \dots, M$}
		\State $\alpha\gets\lambda/2k,\;\; a\gets e^{-k\Delta t_m}$
		\State $b\gets B\sim \pol(\alpha,1-a^2)$ \Comment{Generate a Polya $(\alpha,1-a^2)$ \rv}
		\State $z_a^{(r)} \gets Z_a^{(r)}\sim\erl_{b}\left(^{\beta}/_a\right), r\in\{u, d\}$; \Comment{ Generate two independent  Erlang \rv's with rate $\beta/	a$}
		\State $z_a^m = z_a^{(u)} - z_a^{(d)}$
		\State $X(t_m)\gets a\,X(t_{m-1}) + z_a^{(m)}$.
		\EndFor
		\end{algorithmic}
\end{algorithm}

In addition, knowing the density in closed-form 
\begin{equation}\label{eq:accRej:bgou}
	g_a(x) \le \sum_{k\ge 0}^{\infty}\omega_k(a^2, \alpha)^+ \bar{f}_{k, \beta}(x) = \overline{g}(x) = c g(x)
\end{equation}
where
\begin{equation}\label{eq:mix}
   1<c =\sum_{k\ge 0}^{\infty}\omega_k(a^2, \alpha)^+<\infty\qquad p_k =\frac{\omega_k(a^2, \alpha)^+}{c} \qquad g(x)=\sum_{k\ge 0}^{\infty}p_k \bar{f}_{k,
   \beta}(x)
\end{equation}
 so that $g(x)$ turns out to be a true mixture of symmetric bilateral Erlang laws,
namely the \pdf\ of
\begin{equation*}
    V=\sum_{i=0}^{S}\left(X_i^{(u)}-X_i^{(d)}\right)\quad X_i^{(r)}\sim\erl_1(\beta)\quad\PR{S = k} = p_k\quad r\in\{u,d\},
\end{equation*}
one can adapt Algorithm \ref{alg:sd:gamma} to the case of a symmetric \bgou\ process simply replacing the fourth step with Algorithm \ref{alg:sd:laplace} and using the \pdf's in\refeq{eq:accRej:bgou} and\refeq{eq:mix}.
\begin{algorithm}
\caption{($\frac{1}{\sqrt{2}}\le a <1$)}\label{alg:sd:laplace}
		\begin{algorithmic}[1]		
		\setcounter{ALG@line}{3}
		\State $\bar{z}^{(r)}\gets\bar{Z}^{(r)}\sim\erl(S,1), r\in\{u,d\}$\Comment{Generate two independent standard Erlang \rv's.}	
		\State $\bar{z} \gets \bar{z}^{(u)}-\bar{z}^{(d)}$
		\end{algorithmic}
\end{algorithm}

In addition, Algorithm \ref{alg1:ref} can also be extended simply substituting the sixth step by those here below.
 \begin{algorithm}
 \caption{ }\label{alg1:ref:bgou}
		\begin{algorithmic}[1]
		\setcounter{ALG@line}{5}
		\State Generate $n$ \iid\ $J^{(r)}_i\sim\erl_1(\beta), i=1,\dots, n, r\in\{u, d\}$, \Comment {Generate two sets of independent exponential \rv's with random rate $\beta}$
		\State $J_i\gets J^{(u)}_i - J^{(d)}_i$		
		\end{algorithmic}
\end{algorithm}

Finally, the following theorem extends the approach in Qu et al. \cite{QDZ19} to the case of a symmetric \bgou\ process avoiding then to run Algorithm \ref{alg:qdz} twice. \begin{thm}
\begin{equation}
\EXP{e^{iuX(t+s)}|X(s)}=e^{iuX(s)e^{-kt}}\times e^{\lambda t \left(\varphi_{\tilde{L}}(u)-1\right)}
\end{equation}
where
\begin{equation}
\varphi_{\tilde{L}}(u)=\int_0^1\frac{\beta^2e^{2ktv}}{\beta^2e^{2ktv} + u^2}dv
\label{eq:chf:bgou:qdz}
\end{equation}
\end{thm}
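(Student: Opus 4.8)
The plan is to exploit that, when $p=\tfrac12$ and $\beta_1=\beta_2=\beta$, the double exponential jump law of\refeq{eq:double:exp:pdf} degenerates into a centered Laplace law with scale $\beta$, whose \chf\ reads
\begin{equation*}
\varphi_J(u)=\tfrac12\,\frac{\beta}{\beta-iu}+\tfrac12\,\frac{\beta}{\beta+iu}=\frac{\beta^2}{\beta^2+u^2}.
\end{equation*}
Exactly as in the proof of Theorem \ref{th:chf:bgou}, and following Dassios and Jang \cite{DJ03} and Kluge \cite{Kluge2006}, I would start from the integral representation of the conditional \chf\ of a generalized \ou\ process driven by a compound Poisson \emph{BDLP} of intensity $\lambda$,
\begin{equation*}
\log\EXP{e^{iuX(t+s)}|X(s)}=iuX(s)e^{-kt}+\lambda\int_0^t\bigl(\varphi_J(ue^{-kw})-1\bigr)\,dw .
\end{equation*}

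Next I would simplify the integrand. Since $\varphi_J(ue^{-kw})=\beta^2/(\beta^2+u^2e^{-2kw})=\beta^2e^{2kw}/(\beta^2e^{2kw}+u^2)$, the change of variable $w=tv$ rescales the integral onto $[0,1]$ and gives
\begin{equation*}
\lambda\int_0^t\bigl(\varphi_J(ue^{-kw})-1\bigr)\,dw=\lambda t\int_0^1\left(\frac{\beta^2e^{2ktv}}{\beta^2e^{2ktv}+u^2}-1\right)dv=\lambda t\bigl(\varphi_{\tilde L}(u)-1\bigr),
\end{equation*}
with $\varphi_{\tilde L}$ as in\refeq{eq:chf:bgou:qdz}. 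This is exactly the asserted identity, and it simultaneously exhibits the right-hand side as the \chf\ of a compound Poisson whose jumps are a $U\sim\unif([0,1])$-mixture of centered Laplace laws with random scale $\beta e^{ktU}$, in full analogy with Theorem \ref{th:chf:gou:qdz}.

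Finally I would cross-check consistency with the closed forms of Theorem \ref{th:chf:bgou} and Corollary \ref{cor:sbgou}. Writing $\varphi_{\tilde L}(u)-1=\int_0^1 -u^2/(\beta^2e^{2ktv}+u^2)\,dv$, the substitution $y=e^{2ktv}$ together with the partial fraction $1/\bigl(y(\beta^2y+u^2)\bigr)=1/(u^2y)-\beta^2/\bigl(u^2(\beta^2y+u^2)\bigr)$ yields
\begin{equation*}
\varphi_{\tilde L}(u)-1=\frac{1}{2kt}\log\frac{\beta^2+u^2e^{-2kt}}{\beta^2+u^2},
\end{equation*}
so that $e^{\lambda t(\varphi_{\tilde L}(u)-1)}=\bigl((\beta^2+u^2e^{-2kt})/(\beta^2+u^2)\bigr)^{\lambda/2k}$, precisely the product of the two fractional powers in\refeq{eq:chfbgou} evaluated at $p=\tfrac12$, $\beta_1=\beta_2=\beta$. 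The computation is otherwise routine; the only points requiring a little care are the appearance of the factor $2k$ (instead of the $k$ of the one-sided case in Theorem \ref{th:chf:gou:qdz}), which comes from $u^2e^{-2kw}$ rather than $-iue^{-kw}$ entering the exponent, and the observation that $\varphi_{\tilde L}$ is a genuine \chf\ since it is a convex mixture of Laplace \chf's — neither of which is a real obstacle.
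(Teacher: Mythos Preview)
Your argument is correct and self-contained, but it proceeds differently from the paper. The paper does not re-derive the integral formula from the Dassios--Jang representation with the Laplace jump law; instead it invokes the already established one-sided identities (Theorems \ref{th:chf:gou} and \ref{th:chf:gou:qdz}) together with the bilateral closed form of Theorem \ref{th:chf:bgou} at $p=\tfrac12$, $\beta_1=\beta_2=\beta$, and then observes that the resulting exponent is the average $\tfrac12\bigl(\varphi_{\tilde J}(u)+\varphi_{\tilde J}(-u)\bigr)$, which simplifies algebraically to $\int_0^1\beta^2e^{2ktv}/(\beta^2e^{2ktv}+u^2)\,dv$. Your route --- plug the Laplace \chf\ $\beta^2/(\beta^2+u^2)$ straight into the Dassios--Jang integral and rescale $w=tv$ --- is more direct and does not need the one-sided compound Poisson representation of Qu et al.\ as an intermediate step; the paper's route, by contrast, makes explicit that the symmetric \bgou\ result is literally the symmetrization of Theorem \ref{th:chf:gou:qdz}. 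Your closing cross-check, reducing $\varphi_{\tilde L}(u)-1$ to $\tfrac{1}{2kt}\log\bigl((\beta^2+u^2e^{-2kt})/(\beta^2+u^2)\bigr)$, is not in the paper's proof but is a nice consistency verification.
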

\noindent the right-hand side in\refeq{eq:chf:bgou:qdz} is then the \chf\ of compound Poisson whose jumps are independent copies $\tilde{J}_i$ distributed according to a uniform mixture of centered Laplace laws with random parameter $\beta e^{ktU}$ with $U\sim\unif([0,1])$. 
\begin{proof}
From Theorem \ref{th:chf:gou} and Theorem \ref{th:chf:gou:qdz} we know that   
\begin{equation*}
e^{\frac{\lambda t}{2}\left(\varphi_{\tilde{J}}(u)-1\right)}=\left(\frac{\beta -iue^{-kt}}{\beta -iu}\right)^{\frac{\lambda}{2\,k}}
\end{equation*}
where $\varphi_{\tilde{J}}(u)$ is defined in\refeq{eq:chf:exp:qdz} then from Theorem \ref{th:bgamma} and Theorem \ref{th:chf:bgou} with $p=1/2$, $\beta_1=\beta_2$, we have
\begin{equation*}
\EXP{e^{iuX(t+s)}|X(s)}=e^{iuX(s)e^{-kt}}\times e^{\frac{\lambda t}{2}t \left(\varphi_{\tilde{J}}(u)+ \varphi_{\tilde{J}}(-u)-2\right)} =
e^{iuX(s)e^{-kt}}\times e^{\lambda t \left(\frac{\varphi_{\tilde{J}}(u)+ \varphi_{\tilde{J}}(-u)}{2}	-1\right)}
\end{equation*}
on the other hand, we observe that
\begin{equation*}
\frac{\varphi_{\tilde{J}}(u)+ \varphi_{\tilde{J}}(-u)}{2} =
\frac{1}{2}\int_0^1\left(\frac{\beta e^{ktv}}{\beta e^{tv} - iu}+\frac{\beta e^{ktv}}{\beta e^{ktv} + iu}\right)dv =\int_0^1\frac{\beta^2e^{ktv}}{\beta ^2e^{ktv} + u^2}dv
\end{equation*}
that concludes the proof.
\end{proof}
\noindent It turns out that a symmetric \bgou\ can be simulated as detailed in Algorithm \ref{alg:bgou:qdz}
\begin{algorithm}
\caption{ }\label{alg:bgou:qdz}
\begin{algorithmic}[1]
		\For{ $m=1, \dots, M$}
		\State Generate $n\sim\poiss(\lambda\Delta t_m)$,\Comment {Poisson \rv~with intensity $\lambda\Delta t_m$}
		\State Generate $n$ \iid\ uniform \rv's $\bm{u}=(u_1, \dots, u_n)\,\sim\,\unif([0,1]^n)$.
		\State $\beta_i^{(r)}\gets\beta  e^{k\Delta t_m u_i}, i=1,\dots, n, r\in\{u, d\}$.
		\State Generate $n$ \iid\ $\tilde{J}^{(r)}_i\sim\erl_1(\beta_i^{(r)}), i=1,\dots, n$, \Comment {Generate two sets of independent exponential \rv's with random rate $\beta_i^{(r)}}$
		\State $\tilde{J}_i\gets \tilde{J}^{(u)}_i - \tilde{J}^{(d)}_i$		
		\State $X(t_m)\gets X(t_{m-1})e^{-k\Delta t_m} + \sum_{i=1}^{n}\tilde{J}_i$.
		\EndFor
		\end{algorithmic}
\end{algorithm}

%%%%%%%%%%%%%%%%%%%%%%%%%%%%%%%%%%%%%%%%%%%%%%%%%%%%%%%%%%%%%%%%%%%%%%%%%%%%%%%%%%%%%%%%%%%%%%%%%%%%%%%%%%%%%%%%5

%%%%%%%%%%%%%%%%%%%%%%%%%%%%%%%%%%%%%%%%%%%%%%%%%%%%%%%%%%%%%%%%%%%%%%%%%%%%%%%%%%%%%%%%%%%%%%%%%%%%%%%%%%%%%%%%5
\section{Simulation Experiments}\label{sec:sim:exper}
%%%%%%%%%%%%%%%%%%%%%%%%%%%%%%%%%%%%%%%%%%%%%%%%%%%%%%%%%%%%%%%%%%%%%%%%%%%%%%%%%%%%%%%%%%%%%%%%%%%%%%%%%%%%%%%%5
In this section we compare the performance of the Algorithms detailed in subsection \ref{subsec:simulation:gou} for the \gou\ process and in subsection \ref{subsec:simulation:bgou} for the \bgou\ process. The performance is ranked in terms of convergence and in terms of CPU times.
All the simulation experiments in the present paper
have been conducted using \emph{MATLAB R2019a} with a $64$-bit
Intel Core i5-6300U CPU, 8GB \footnote{The relative codes are available at \url{https://github.com/piergiacomo75/GammaOUBiGammaOU} }.  
As an additional validation, the
comparisons of the simulation computational times have
also been performed with \emph{R} and \emph{Python}  leading to the same conclusions.

We first consider a \gou\ process with parameters $(k, \lambda, \beta, x_0) =(36, 10, 3, 0)$ and we only simulate one time step at $\Delta t=1/365$. We observe that Algorithm \ref{alg:sd} is still suitable because $0.5\le a <1$  ($a=e^{-k\Delta t}\approx 0.9061$) where we have truncated the series in\refeq{eq:accRej} and\refeq{mix} at the $40$-th term. Here we have chosen different values for the Poisson intensity $\lambda$ and the rate of the jump size $\beta$ to let the investigation be more thorough. 

In realistic examples, one could estimate the parameters relying on the closed form of the transition densities of the process, using the generalized method of moments or the least squares method. The idea of coupling a \gou\ process with a Gaussian-OU process is common in the modeling of energy prices (see among others for instance, Cartea and Figueroa \cite{CarteaFigueroa}, Kjaer \cite{Kjaer2008} and Kluge \cite{Kluge2006}), indeed, the choice of the parameters above is motivated by the fact that these numbers look like realistic values that can be adopted for the
pricing of energy facilities,. Beyond the energy world, applications of the \gou\ process to portfolio selection or to credit risk can be found in Schoutens and Cariboni \cite{SchoutensCariboni}, Bianchi and Fabozzi \cite{Bianchi2015} and Bianchi and Tassinari \cite{BianchiTassinari}. 
 %%%%%%%%%%%%%%%%%%%%%%%%%%%%%%%%%%%%%%%%%%%%%%%%%%%%%%%%%%%%%%%%%%%%%%%%%%%%%%%%%%%%%%%%%%%%%%%%%%%%%%%%%%%%%%%%%%%%%%%%%%%%%
		\begin{table}[ht!]
    \centering\scriptsize
		\resizebox{\textwidth}{!}{
        \begin{tabular}{*{10}{|cc|cc|cc|cc|cc}}
					\hline
										\multicolumn{2}{|c}{} &  \multicolumn{2}{|c}{$\EXP{X(t)} = 0.0087$} & \multicolumn{2}{|c}{$\VAR{X(t)}=0.0055$} & \multicolumn{2}{|c}{$\SK{X(t)}=12.83$} & \multicolumn{2}{|c|}{$\KUR{X(t)}=222.71$}\\			
					\hline	
					\multicolumn{10}{|c|}{Algorithm\myref{alg:sd}}\\					
					\hline
					$N_S$ & CPU & MC & error \% & MC & error \% & MC & error \% & MC & error \%	\\							
					\hline
					$10000$ & $0.0050$ & $0.0089$ & $2.12$\% & $0.0052$ & $6.42$\% & $11.25$ & $12.30$\% & $157.64$ & $27.25$\% \\
					$40000$ & $0.0136$ & $0.0091$ & $4.27$\% & $0.0060$ & $8.89$\% & $12.41$ & $3.24$\% & $198.06$ & $8.60$\% \\
					$160000$ & $0.0524$ & $0.0084$ & $3.76$\% & $0.0053$ & $4.92$\% & $13.01$ & $1.44$\% & $227.59$ & $5.02$\% \\
					$640000$ & $0.2139$ & $0.0086$ & $1.07$\% & $0.0054$ & $1.95$\% & $12.83$ & $0.01$\% & $220.47$ & $1.74$\% \\
					$2560000$ & $0.8757$ & $0.0088$ & $1.42$\% & $0.0056$ & $1.81$\% & $12.69$ & $1.09$\% & $220.91$ & $1.94$\% \\
					\hline
					\hline
					\multicolumn{10}{|c|}{Algorithm\myref{alg:sd:gamma}}\\
					\hline
					$10000$ & $0.0718$ & $0.0083$ & $4.70$\% & $0.0049$ & $12.22$\% & $12.04$ & $6.16$\% & $189.14$ & $12.72$\%\\
					$40000$ & $0.1948$ & $0.0093$ & $6.71$\% & $0.0062$ & $11.60$\% & $12.67$ & $1.25$\% & $209.80$ & $3.19$\%\\
					$160000$ & $0.6349$ & $0.0092$ & $5.53$\% & $0.0060$ & $8.49$\% & $12.48$ & $2.72$\% & $205.54$ & $5.15$\%\\
					$640000$ & $2.3906$ & $0.0088$ & $0.63$\% & $0.0055$ & $0.11$\% & $12.69$ & $1.08$\% & $216.85$ & $0.07$\%\\
					$2560000$ & $8.9852$ & $0.0087$ & $0.19$\% & $0.0055$ & $0.07$\% & $12.74$ & $0.71$\% & $217.53$ & $0.38$\%\\
					\hline
					\hline
					\multicolumn{10}{|c|}{Algorithm\myref{alg1:ref}}\\
					\hline
					$10000$ & $0.15$ & $0.0077$ & $11.23$\% & $0.0044$ & $20.21$\% & $13.46$ & $4.93$\% & $248.17$ & $14.52$\%\\
					$40000$ & $0.48$ & $0.0085$ & $2.77$\% & $0.0050$ & $9.54$\% & $11.92$ & $7.08$\% & $182.57$ & $15.75$\%\\
					$160000$ & $1.81$ & $0.0084$ & $3.18$\% & $0.0053$ & $4.46$\% & $13.18$ & $2.72$\% & $237.77$ & $9.72$\%\\
					$640000$ & $7.26$ & $0.0087$ & $0.01$\% & $0.0055$ & $0.05$\% & $12.79$ & $0.30$\% & $220.48$ & $1.74$\%\\
					$2560000$ & $29.98$ & $0.0086$ & $0.64$\% & $0.0054$ & $1.45$\% & $12.83$ & $0.01$\% & $222.45$ & $2.65$\%\\
					\hline
					\hline
					\multicolumn{10}{|c|}{Algorithm\myref{alg:qdz}}\\
					\hline
					$10000$ & $0.19$ & $0.0090$ & $3.33$\% & $0.0056$ & $0.94$\% & $11.45$ & $10.71$\% & $160.49$ & $25.94$\%\\
					$40000$ & $0.59$ & $0.0090$ & $3.93$\% & $0.0059$ & $6.70$\% & $12.28$ & $4.24$\% & $191.69$ & $11.54$\%\\
					$160000$ & $2.35$ & $0.0085$ & $1.79$\% & $0.0053$ & $3.66$\% & $12.83$ & $0.02$\% & $224.77$ & $3.72$\%\\
					$640000$ & $9.32$ & $0.0086$ & $0.75$\% & $0.0056$ & $0.52$\% & $13.23$ & $3.17$\% & $220.55$ & $1.77$\%\\
					$2560000$ & $37.32$ & $0.0088$ & $0.65$\% & $0.0056$ & $0.73$\% & $12.71$ & $0.89$\% & $217.25$ & $0.25$\%\\
					\hline
        \end{tabular}				
		}
    \scriptsize
    \caption{\footnotesize{CPU times in seconds and comparison  among the true $\EXP{X(t)}$, $\VAR{X(t)}$, $\SK{X(t)}$ and $\KUR{X(t)}$ of a \gou\ process at time $t=1/365$ with $(k, \lambda, \beta, x_0) =(36, 10, 3, 0)$ and their relative estimated values with $N_S$ MC scenarios using Algorithms \myref{alg:sd}, \ref{alg:sd:gamma}, \ref{alg1:ref} and \ref{alg:qdz}.}}\label{tab:gou:one:step:MC}
\end{table}

		\begin{table}[ht!]
    \centering\scriptsize
		\resizebox{\textwidth}{!}{
        \begin{tabular}{*{10}{|cc|cc|cc|cc|cc}}
					\hline
										\multicolumn{2}{|c}{} &  \multicolumn{2}{|c}{$\EXP{X(t)} = 6.8522$} & \multicolumn{2}{|c}{$\VAR{X(t)}=1.2642$} & \multicolumn{2}{|c}{$\SK{X(t)}=2.1861$} & \multicolumn{2}{|c|}{$\KUR{X(t)}=9.4919$}\\			
					\hline	
					\multicolumn{10}{|c|}{Algorithm\myref{alg:sd}}\\					
					\hline
					$N_S$ & CPU & MC & error \% & MC & error \% & MC & error \% & MC & error \%	\\							
					\hline
					$10000$ &$0.0203$ &$6.8612$ &$0.13$\% &$1.2922$ &$2.17$\% &$2.1615$ &$-1.14$\% &$9.0429$ &$-4.97$\%\\
					$40000$ &$0.0262$ &$6.8580$ &$0.08$\% &$1.3048$ &$3.11$\% &$2.2173$ &$1.41$\% &$9.6260$ &$1.39$\%\\
					$160000$ &$0.1752$ &$6.8476$ &$-0.07$\% &$1.2647$ &$0.04$\% &$2.2338$ &$2.14$\% &$9.9624$ &$4.72$\%\\
					$640000$ &$0.7359$ &$6.8531$ &$0.01$\% &$1.2680$ &$0.30$\% &$2.1955$ &$0.43$\% &$9.6146$ &$1.28$\%\\
					$2560000$ &$3.1980$ &$6.8520$ &$0.00$\% &$1.2625$ &$-0.13$\% &$2.1904$ &$0.20$\% &$9.5559$ &$0.67$\%\\
					\hline
					\hline
					\multicolumn{10}{|c|}{Algorithm\myref{alg:sd:gamma}}\\
					\hline
					$10000$ &$0.238$ &$6.8358$ &$-0.24$\% &$1.1968$ &$-5.63$\% &$2.1230$ &$-2.97$\% &$8.7994$ &$-7.87$\%\\
					$40000$ &$0.721$ &$6.8537$ &$0.02$\% &$1.2628$ &$-0.11$\% &$2.1580$ &$-1.30$\% &$9.3910$ &$-1.07$\%\\
					$160000$ &$2.531$ &$6.8549$ &$0.04$\% &$1.2636$ &$-0.05$\% &$2.1902$ &$0.19$\% &$9.6369$ &$1.50$\%\\
					$640000$ &$14.451$ &$6.8526$ &$0.01$\% &$1.2655$ &$0.10$\% &$2.1828$ &$-0.15$\% &$9.4517$ &$-0.43$\%\\
					$2560000$ &$60.933$ &$6.8512$ &$-0.01$\% &$1.2602$ &$-0.32$\% &$2.1903$ &$0.19$\% &$9.5654$ &$0.77$\%\\
					\hline
					\hline
					\multicolumn{10}{|c|}{Algorithm\myref{alg1:ref}}\\
					\hline
					$10000$ &$0.41$ &$6.8664$ &$0.21$\% &$1.2755$ &$0.89$\% &$2.1181$ &$-3.21$\% &$8.9754$ &$-5.76$\%\\
					$40000$ &$1.37$ &$6.8423$ &$-0.14$\% &$1.2279$ &$-2.96$\% &$2.1568$ &$-1.36$\% &$9.2733$ &$-2.36$\%\\
					$160000$ &$5.53$ &$6.8578$ &$0.08$\% &$1.2830$ &$1.47$\% &$2.1674$ &$-0.86$\% &$9.1683$ &$-3.53$\%\\
					$640000$ &$22.09$ &$6.8518$ &$-0.01$\% &$1.2673$ &$0.25$\% &$2.1968$ &$0.49$\% &$9.5983$ &$1.11$\%\\
					$2560000$ &$96.19$ &$6.8523$ &$0.00$\% &$1.2637$ &$-0.04$\% &$2.1847$ &$-0.07$\% &$9.5011$ &$0.10$\%\\
					\hline
					\multicolumn{10}{|c|}{Algorithm\myref{alg:qdz}}\\
					\hline
					$10000$ &$0.44$ &$6.8499$ &$-0.03$\% &$1.2870$ &$1.77$\% &$2.3304$ &$6.19$\% &$10.879$ &$12.75$\%\\
					$40000$ &$1.63$ &$6.8431$ &$-0.13$\% &$1.2270$ &$-3.03$\% &$2.1825$ &$-0.16$\% &$9.5881$ &$1.00$\%\\
					$160000$ &$6.54$ &$6.8515$ &$-0.01$\% &$1.2620$ &$-0.18$\% &$2.1988$ &$0.58$\% &$9.6740$ &$1.88$\%\\
					$640000$ &$29.53$ &$6.8535$ &$0.02$\% &$1.2688$ &$0.36$\% &$2.1898$ &$0.17$\% &$9.5400$ &$0.50$\%\\
					$2560000$ &$100.99$ &$6.8525$ &$0.00$\% &$1.2655$ &$0.10$\% &$2.1871$ &$0.05$\% &$9.4898$ &$-0.02$\%\\
					\hline
        \end{tabular}				
		}
    \scriptsize
    \caption{\footnotesize{CPU times in seconds and comparison among the true $\EXP{X(t)}$, $\VAR{X(t)}$, $\SK{X(t)}$ and $\KUR{X(t)}$ of a \gou\ process at time $t=1$ with $(k, \lambda, \beta, x_0) =(0.5, 1, 1, 10)$ and their relative estimated values with $N_S$ MC scenarios using Algorithms \myref{alg:sd}, \ref{alg:sd:gamma}, \ref{alg1:ref} and \ref{alg:qdz}.} }\label{tab:gou:trajectory:MC}
\end{table}

		\begin{figure}
				\caption{\footnotesize{\gou\ with $(k, \lambda, \beta, x_0) =(36, 10, 3, 0)$, $\Delta t = 1/365.$}}
				\begin{subfigure}[t]{.5\textwidth}{
				\centering										
					\includegraphics[width=70mm]{./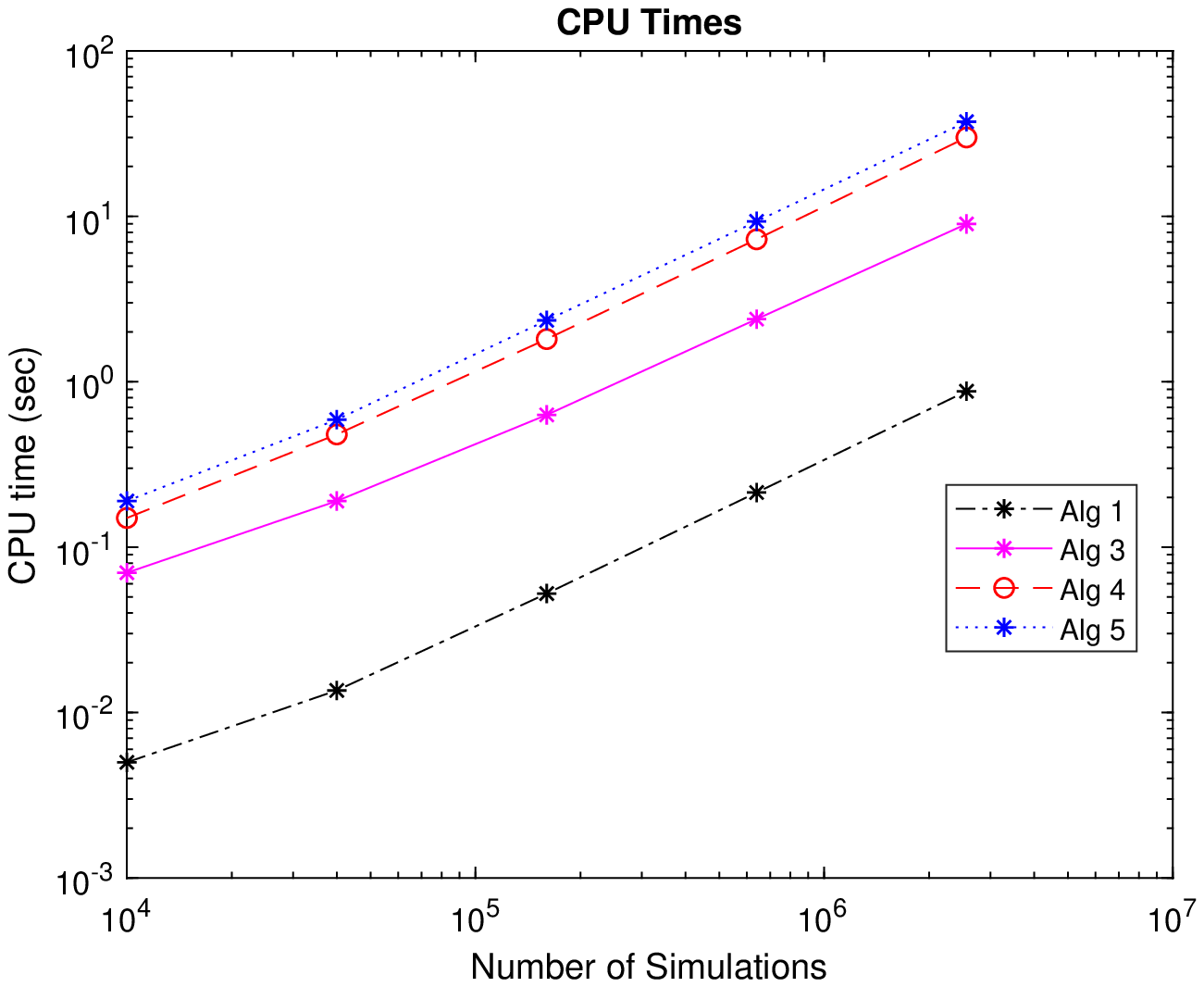}
					\caption{CPU times in seconds.}\label{fig:gou:ComputationalTimes}
					}
				\end{subfigure}
				\begin{subfigure}[t]{.5\textwidth}{
						\centering
					\includegraphics[width=70mm]{./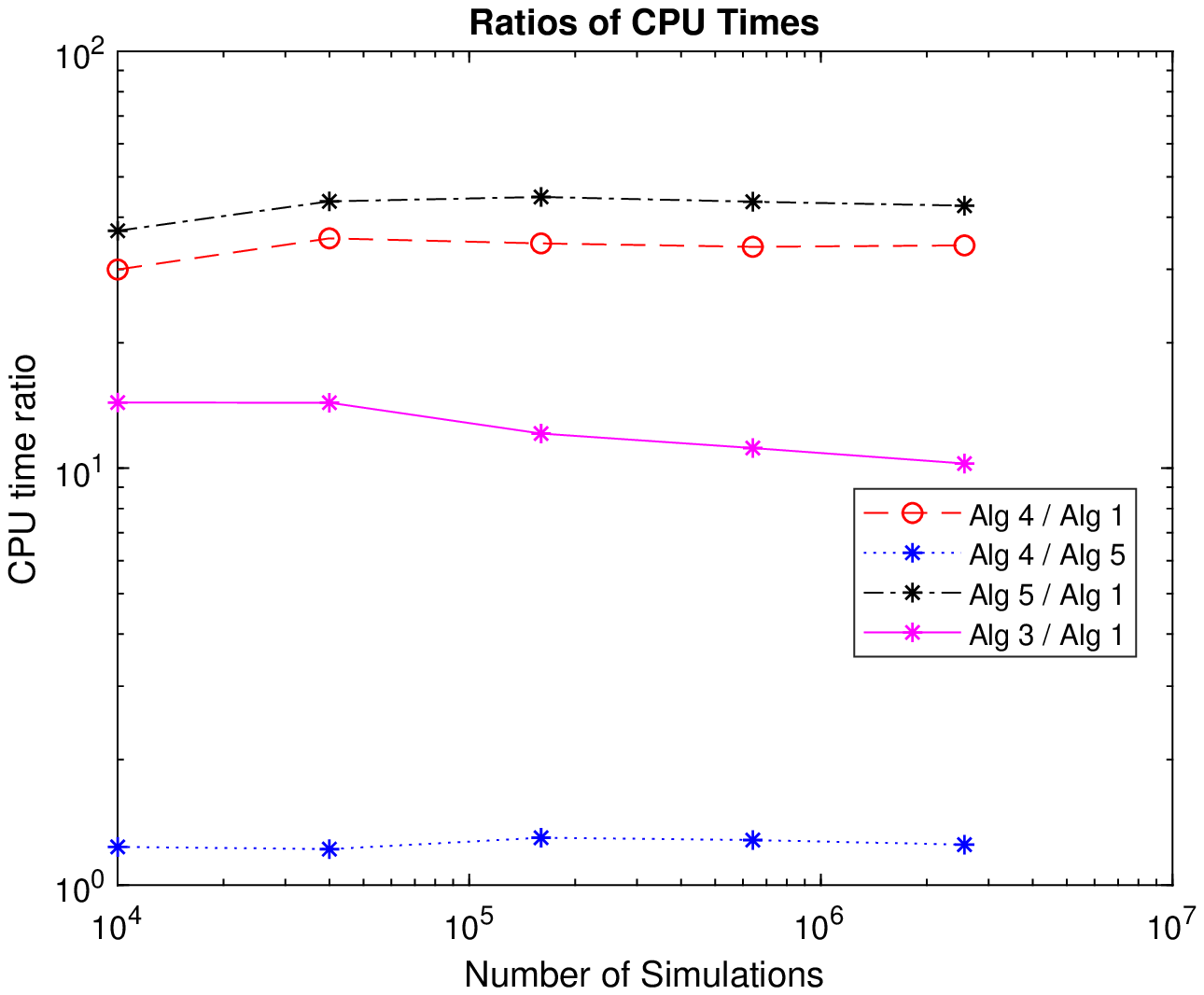}
						\caption{\footnotesize{Ratios CPU times}}\label{fig:gou:RatioComputationalTimes}
					}
				\end{subfigure}
		\end{figure}%
%%%%%%%%%%%%%%%%%%%%%%%%%%%%%%%%%%%%%%%%%%%%%%%%%%%%%%%%%%%%%%%%%%%%%%%%%%%%%%%%%%%%%%%%%%%%%%%%%%%%%%%%%%%%%%%%%%%%%%%%%%%%%
Table \ref{tab:gou:one:step:MC} reports the CPU times in seconds of all the approaches and compares the MC estimated values of the true $\EXP{X(t)}$, $\VAR{X(t)}$, $\SK{X(t)}$ and $\KUR{X(t)}$ at time $t=1/365$. Varying the number of simulations $N_S$, we can conclude that all the algorithms are equally convergent, although it seems that a large number of simulations is required to achieve a good estimate of the kurtosis. On the other hand, their computational performance is quite different. 
Figure~\ref{fig:gou:ComputationalTimes} and~\ref{fig:gou:RatioComputationalTimes} clearly show that Algorithm\myref{alg:sd}  by far outperforms all other approaches. It provides a remarkable improvement in terms of computational time that is at least 
$30$ times smaller than that of any other alternative available in the literature. With our computer generating $N_S=2560000$ values of $X(t)$ at time $t=1/365$ does not even take a second in contrast to several seconds using the other alternatives. Algorithm \ref{alg:sd:gamma} is also faster than Algorithms \ref{alg1:ref} and Algorithm \ref{alg:qdz} although based on a acceptance-rejection method, but unfortunately, it is only applicable under the constraint $0.5\le e^{-k\Delta t}<1$. To conclude, it seems that Algorithms \ref{alg1:ref} and Algorithm \ref{alg:qdz} exhibit similar CPU times. 

Of course, the superior performances of Algorithm \ref{alg:sd} with respect to all the alternatives becomes even more remarkable when the entire trajectory over a time grid is simulated. 
To this end, we generate the skeleton of the process on an equally space time grid $t_1, \dots, t_M$ with $M=4$ and $\Delta t=1/4$. In order to better highlight the difference in performance among the approaches, we have here chosen the same parameter set as in Qu et al. \cite{QDZ19}, $(k, \lambda, \beta, x_0)=(0.5, 1, 1, 10)$. 

The results in Table \ref{tab:gou:trajectory:MC} confirm that our proposal provides the smallest CPU times making Algorithm \ref{alg:sd} very attractive for real-time calculations. We remark that the CPU times in Table \ref{tab:gou:trajectory:MC} are relative to the simulation of entire trajectory with four time steps while instead, the estimated statistics refer to the process at time $t=1$. Refining the time grid with a smaller time step will increase the overall computational times almost linearly making all alternatives to Algorithm \ref{alg:sd} not competitive for real-time applications. 
It is also worthwhile noticing that our implementation, although based on a less powerful computer, returns smaller CPU times than those reported in Qu et al. \cite{QDZ19} relative these authors' approach.
Finally, as described in Section \ref{sec:bgou}, the simulation of a \bgou\ process can be obtained by repeating the algorithms above two times therefore, we can extrapolate the same conclusions with regards to the \bgou\ case. 

We conclude this section illustrating the results of the numerical experiments relative to a symmetric \bgou\ process where we have chosen the same set of parameters selected for the \gou process. $\EXP{X(t)}=x_0 e^{-kt}$ and the skewness is zero therefore in Table \ref{tab:gou:one:step:MC} we show the CPU times in seconds and the MC estimated values of the true $\VAR{X(t)}$ and $\KUR{X(t)}$ at time $t=1/365$ only.  We also remark that Algorithm \ref{alg:sd:laplace} is also applicable because $\sqrt{2}/2\le a <1$ using both parameter sets. The conclusions are very much in line with what found for a \gou\ process. As expected, all the approaches are equally convergent and the CPU times are higher that those for the \gou\ case because all the solutions require additional steps. From Figures~\ref{fig:bgou:ComputationalTimes} and~\ref{fig:bgou:RatioComputationalTimes} one can observe that Algorithm \ref{alg:bgousd} is by far the fastest solution and Algorithm \ref{alg:sd:laplace}, even if based on an acceptance rejection method, is once more a faster solution than  Algorithms \ref{alg1:ref:bgou} and Algorithm \ref{alg:bgou:qdz}. On the other hand, these last two approaches seem to be equally fast with the former slightly outperforming the approach in Qu et al. adapted to the symmetric \bgou\ process. 

In Table \ref{tab:bgou:trajectory:MC} we also report the results of generating the trajectory of a symmetric \bgou\ with the same parameters and time grid of the \gou\ case. The values in Table~\ref{tab:bgou:trajectory:MC} once more  confirm that our newly developed simulation approach, detailed Algorithm \ref{alg:bgousd}, exhibits high accuracy as well as efficiency and in particular, largely outperforms any other alternative. 
				
 %%%%%%%%%%%%%%%%%%%%%%%%%%%%%%%%%%%%%%%%%%%%%%%%%%%%%%%%%%%%%%%%%%%%%%%%%%%%%%%%%%%%%%%%%%%%%%%%%%%%%%%%%%%%%%%%%%%%%%%%%%%%%
			\begin{table}[ht!]
    \centering\scriptsize
		\resizebox{\textwidth}{!}{
        \begin{tabular}{{|c|c|cc|cc||c|cc|cc|}}
					\hline
					\multicolumn{2}{|c}{} &  \multicolumn{2}{|c|}{$\VAR{X(t)}=0.0055$} & \multicolumn{2}{c||}{$\KUR{X(t)}=222.71$} & &
					\multicolumn{2}{c}{$\VAR{X(t)}=0.0055$}  & \multicolumn{2}{|c|}{$\KUR{X(t)}=222.71$}\\			
					\hline	
					& \multicolumn{5}{c||}{Algorithm\myref{alg:bgousd}} & \multicolumn{5}{c|}{Algorithm\myref{alg:sd:laplace}}\\					
					\hline
					$N_S$ & CPU & MC & error \% & MC & error \% & CPU & MC & error \% & MC & error \%	\\							
					\hline
					$10000$&$0.0034$&$0.0062$&$12.9$\%&$141.39$&$34.8$\%&$0.0837$&$0.0066$&$20.1$\%&$218.45$&$0.8$\%\\
					$40000$&$0.0121$&$0.0055$&$0.97$\%&$202.53$&$6.54$\%&$0.1634$&$0.0057$&$2.96$\%&$176.28$&$18.66$\%\\
					$160000$&$0.0559$&$0.0054$&$2.83$\%&$207.09$&$4.44$\%&$0.6415$&$0.0059$&$6.20$\%&$227.93$&$5.18$\%\\
					$640000$&$0.2284$&$0.0054$&$2.35$\%&$228.70$&$5.53$\%&$2.3321$&$0.0057$&$2.79$\%&$217.46$&$0.34$\%\\
					$2560000$&$0.9188$&$0.0055$&$1.15$\%&$211.83$&$2.25$\%&$9.2060$&$0.0055$&$0.39$\%&$218.86$&$0.99$\%\\
					\hline
					\hline
					& \multicolumn{5}{c||}{Algorithm\myref{alg1:ref:bgou}} & \multicolumn{5}{c|}{Algorithm\myref{alg:bgou:qdz}}\\					
					\hline
					$10000$&$0.15$&$0.0063$&$13.2$\%&$266.74$&$23.1$\%&$0.1915$&$0.0052$&$6.4$\%&$190.45$&$12.1$\%\\
					$40000$&$0.56$&$0.0052$&$5.14$\%&$203.09$&$6.28$\%&$0.7769$&$0.0059$&$6.94$\%&$212.10$&$2.13$\%\\
					$160000$&$2.24$&$0.0055$&$0.26$\%&$246.67$&$13.82$\%&$3.0793$&$0.0060$&$8.11$\%&$223.25$&$3.02$\%\\
					$640000$&$8.98$&$0.0054$&$1.48$\%&$216.01$&$0.32$\%&$12.3615$&$0.0056$&$0.58$\%&$224.35$&$3.06$\%\\
					$2560000$&$36.16$&$0.0055$&$0.54$\%&$223.38$&$0.52$\%&$49.8659$&$0.0055$&$0.31$\%&$217.83$&$0.52$\%\\
					\hline
        \end{tabular}				
		}
    \scriptsize
    \caption{\footnotesize{CPU times in seconds and comparison among the true $\VAR{X(t)}$ and $\KUR{X(t)}$ of a symmetric \bgou\ process at time $t=1/365$ with $(k, \lambda, \beta, x_0) =(36, 10, 3, 0)$ and their relative estimated values with $N_S$ MC scenarios using Algorithms\myref{alg:bgousd}, \ref{alg:sd:laplace}, \ref{alg1:ref:bgou} and \ref{alg:bgou:qdz}.}}\label{tab:bgou:one:step:MC}
\end{table}

			\begin{table}[ht!]
    \centering\scriptsize
		\resizebox{\textwidth}{!}{
        \begin{tabular}{{|c|c|cc|cc||c|cc|cc|}}
					\hline
					\multicolumn{2}{|c}{} &  \multicolumn{2}{|c}{$\VAR{X(t)}=1.2642$} & \multicolumn{2}{|c||}{$\KUR{X(t)}=9.4919$} & &
					\multicolumn{2}{c}{$\VAR{X(t)}=1.2642$}  & \multicolumn{2}{|c|}{$\KUR{X(t)}=9.4919$}\\			
					\hline	
					& \multicolumn{5}{c||}{Algorithm\myref{alg:bgousd}} & \multicolumn{5}{c|}{Algorithm\myref{alg:sd:laplace}}\\					
					\hline
					$N_S$ & CPU & MC & error \% & MC & error \% & CPU & MC & error \% & MC & error \%	\\							
					\hline
					$10000$&$0.0157$&$1.2539$&$-0.82$\%&$9.306$&$-1.99$\%&$0.2502$&$1.2920$&$2.15$\%&$8.966$&$-5.87$\%\\
					$40000$&$0.0193$&$1.2148$&$-4.07$\%&$9.370$&$-1.31$\%&$0.9680$&$1.2478$&$-1.32$\%&$9.249$&$-2.62$\%\\
					$160000$&$0.1098$&$1.2598$&$-0.35$\%&$9.498$&$0.06$\%&$3.4811$&$1.2619$&$-0.19$\%&$9.450$&$-0.44$\%\\
					$640000$&$0.4614$&$1.2617$&$-0.20$\%&$9.509$&$0.18$\%&$14.713$&$1.2576$&$-0.53$\%&$9.536$&$0.47$\%\\
					$2560000$&$3.1008$&$1.2650$&$0.06$\%&$9.537$&$0.47$\%&$61.448$&$1.2648$&$0.05$\%&$9.509$&$0.18$\%\\
					\hline
					\hline
					& \multicolumn{5}{c||}{Algorithm\myref{alg1:ref:bgou}} & \multicolumn{5}{c|}{Algorithm\myref{alg:bgou:qdz}}\\				
					\hline
					$10000$&$0.6143$&$1.2844$&$1.6$\%&$10.60$&$10.5$\%&$0.7889$&$1.2325$&$-2.57$\%&$8.222$&$-15.4$\%\\
					$40000$&$2.2989$&$1.2950$&$2.38$\%&$9.668$&$1.83$\%&$3.1908$&$1.2771$&$1.01$\%&$9.168$&$-3.54$\%\\
					$160000$&$9.2519$&$1.2578$&$-0.51$\%&$9.510$&$0.19$\%&$12.327$&$1.2564$&$-0.63$\%&$9.582$&$0.94$\%\\
					$640000$&$36.601$&$1.2617$&$-0.20$\%&$9.579$&$0.91$\%&$48.689$&$1.2622$&$-0.16$\%&$9.422$&$-0.74$\%\\
					$2560000$&$147.79$&$1.2628$&$-0.12$\%&$9.455$&$-0.39$\%&$199.71$&$1.2651$&$0.06$\%&$9.498$&$0.06$\%\\
					\hline
        \end{tabular}				
		}
    \scriptsize
    \caption{\footnotesize{CPU times in seconds and comparison among the true $\VAR{X(t)}$ and $\KUR{X(t)}$ of a symmetric \bgou\ process at time $t=1$ with $(k, \lambda, \beta, x_0) =(0.5, 1, 1, 10)$ and their relative estimated values with $N_S$ MC scenarios using Algorithms\myref{alg:bgousd}, \ref{alg:sd:laplace}, \ref{alg1:ref:bgou} and \ref{alg:bgou:qdz}.}}\label{tab:bgou:trajectory:MC}
\end{table}

			\begin{figure}
					\caption{\footnotesize{Symmetric \bgou\ with $(k, \lambda, \beta, x_0) =(36, 10, 3, 0)$, $\Delta t = 1/365.$}}
					\begin{subfigure}[t]{.5\textwidth}{
					\centering										
						\includegraphics[width=70mm]{./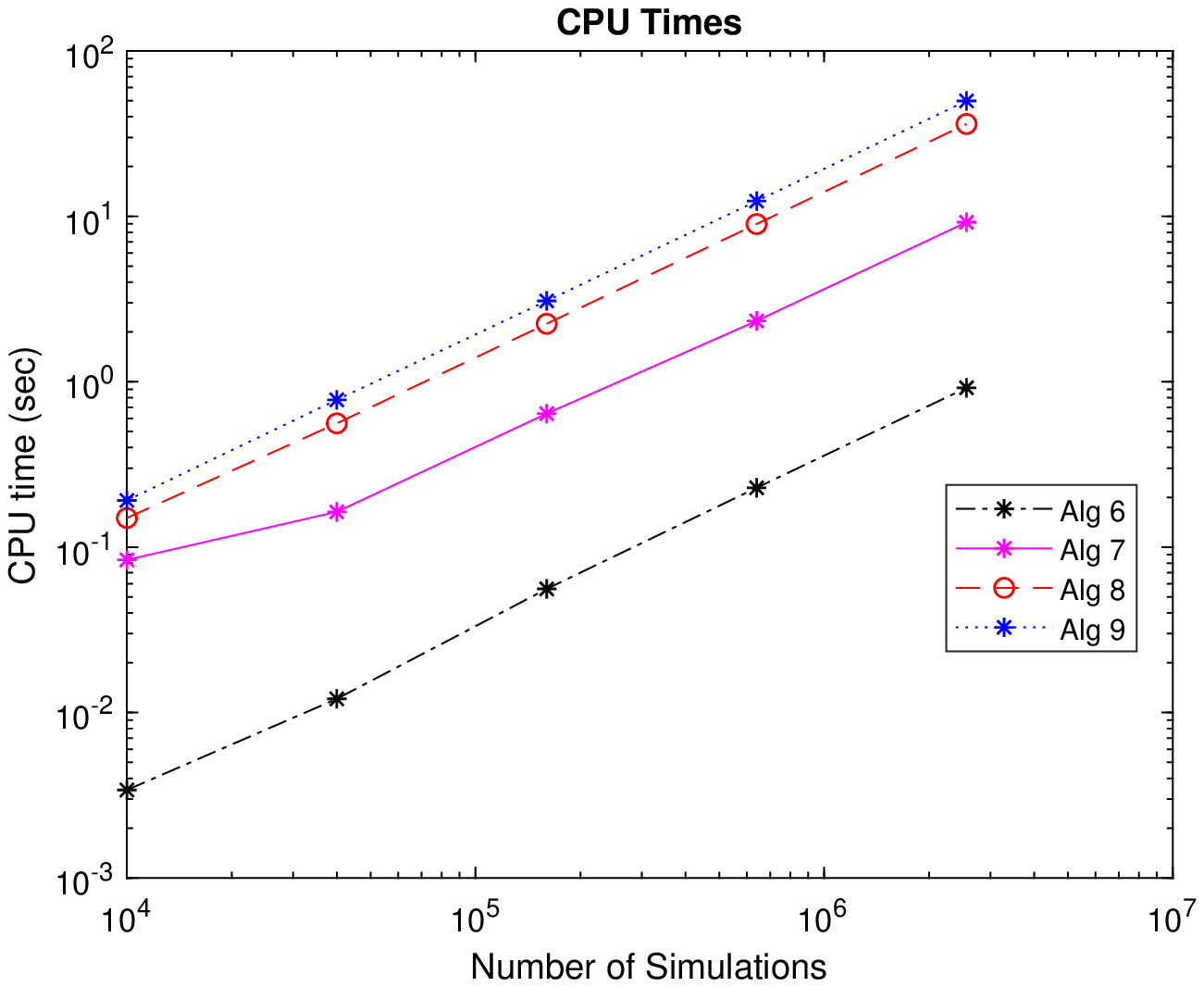}
						\caption{\footnotesize{CPU times in seconds.}}\label{fig:bgou:ComputationalTimes}
						}
					\end{subfigure}
					\begin{subfigure}[t]{.5\textwidth}{
							\centering
						\includegraphics[width=70mm]{./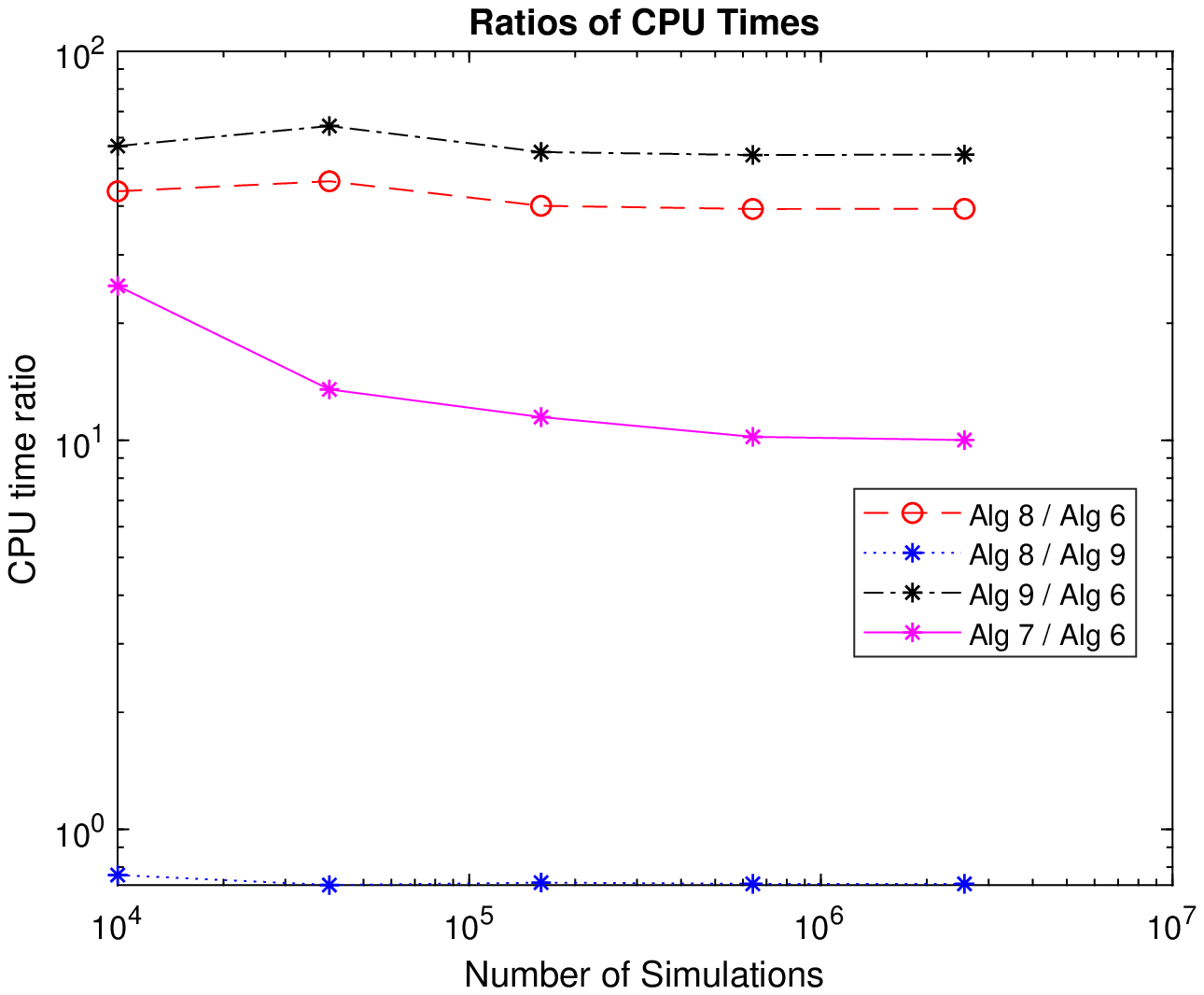}
							\caption{\footnotesize{Ratios CPU times}}\label{fig:bgou:RatioComputationalTimes}
						}
					\end{subfigure}
			\end{figure}%
%%%%%%%%%%%%%%%%%%%%%%%%%%%%%%%%%%%%%%%%%%%%%%%%%%%%%%%%%%%%%%%%%%%%%%%%%%%%%%%%%%%%%%%%%%%%%%%%%%%%%%%%%%%%%%%%%%%%%%%%%%%%%

%%%%%%%%%%%%%%%%%%%%%%%%%%%%%%%%%%%%%%%%%%%%%%%%%%%%%%%%%%%%%%%%%%%%%%%%%%%%%%%
%%%%%%%%%%%%%%%%%%%%%%%%%%%%%%%%%%%%%%%%%%%%%%%%%%%%%%%%%%%%%%%%%%%%%%%%%%%%%%%
        \section{ Conclusions and future inquiries\label{sec:conclusions}}

In this paper we have studied the distributional properties of the \gou\ process and its bilateral counterpart \bgou\ process. 
To this end, we have proven that in the transient regime the law
of such processes is related to the law of the \arem\ of their relative \sd\ stationary laws.
Moreover, we have shown that the \chf's and the \pdf's of such laws can be represented in closed-form as a mixture of known and tractable laws, namely, a mixture of a Polya or a Binomial distribution. 

As a simple consequence, we can design exact and efficient algorithms to generate the trajectory of a \gou\ and a \bgou\ process. 
Our numerical experiments have illustrated that our
strategy has a remarkable computational advantage and cuts the
simulation time down by a factor larger than {$30$} compared to the existing alternatives available in the literature. In particular, due to the very small computational times, they are well suitable for real-time applications. One additional advantage is that our algorithms avoid
the assumption of considering at most one jump per unit of
time.
%: we have shown indeed that this last hypothesis
%leads to biased results, unless the mean-reversion rate and
%the intensity of the Poisson process coincide. 

Moreover, although not the focus of our study, knowing the density in closed-form and having simple formulas for the cumulants of the distribution, one could conceive a parameter estimation procedure based on likelihood methods, on the generalized method of moments using the analogy with the GAR(1) auto-regressive processes introduced in Gaver and Lewis \cite{gaver_lewis_1980}. 
Of course, in any practical applications, some series truncation
rule must be adopted as well as the generalization to time-dependent parameters is still open. These investigations will then be the focus of future inquires.

From the mathematical point of view,
it would also be interesting to study if --  and under which
conditions --  these results hold for other generalized
Ornstein-Uhlenbeck processes: for instance, for processes whose stationary law is a Generalized Gamma Convolutions distribution (see Bondesson \cite{Bondesson1992}) or for a Variance Gamma driven OU as discussed in Cummins et al.\cite{CKM17}.

In a primarily economic and financial perspective, the future
studies could cover the extension to  a multidimensional
setting with correlated Poisson processes as those introduced
for instance in Lindskog and McNeil\cite{LindskogMcNeil} or in
Cufaro Petroni and Sabino \cite{cs17}. A last topic deserving
further investigation is the time-reversal simulation of the \gou\ and \bgou\ processes generalizing the results of Pellegrino and
Sabino\cite{PellegrinoSabino15} and Sabino\cite{Sabino20} to the
case of the mean reverting compound Poisson processes.
%%%%%%%%%%%%%%%%%%%%%%%%%%%%%%%%%%%%%%%%%%%%%%%%%%%%%%%%%%%%%%%%%%%%%%%%%%%%%%%%%%%%%%%%%%%%%%%%%%%%%%%%%%%

    %\newpage
	\bibliographystyle{plain}
   \bibliography{Cufaro_Sabino_Bigamma_OU}
\end{document}